\theoremstyle{plain}
\newtheorem{theorem}{Theorem}[section]
\newtheorem*{theorem*}{Theorem}
\newtheorem{corollary}{Corollary}[section]
\newtheorem{lemma}{Lemma}[section]
\theoremstyle{definition}
\newtheorem*{definition*}{Definition}
\theoremstyle{remark}
\newtheorem*{remark*}{Remark}
\numberwithin{equation}{section}
\begin{document}
\raggedbottom 

\title[Variants of Romanoff's theorem]{Variants of Romanoff's theorem}

\author{Artyom Radomskii}

\begin{abstract} Let $\mathcal{A}=\{a_{n}\}_{n=1}^{\infty}$ and $\mathcal{B}=\{b_{n}\}_{n=1}^{\infty}$ be two sequences of positive  integers (not necessarily distinct). Under some restrictions on $\mathcal{A}$ and $\mathcal{B}$, we obtain a lower bound for a number of integers $n$ not exceeding $x$ that can be represented as a sum $n = a_i + b_j$.
\end{abstract}

 \address{HSE University, Moscow, Russian Federation}

 \email{artyom.radomskii@mail.ru}

\keywords{Romanoff's theorem, Euler's totient function, prime numbers, sums of two squares, elliptic curve.}

\maketitle

\section{Introduction}

N.~P. Romanoff \cite{Romanoff} proved that the set of positive integers representable as the sum of a prime number and a power of a given integer $a>1$ has a positive lower asymptotic density. We generalize this result in the following direction. Let $\mathcal{A}=\{a_{n}\}_{n=1}^{\infty}$ and $\mathcal{B}=\{b_{n}\}_{n=1}^{\infty}$ be two sequences of positive  integers (not necessarily distinct). Under some restrictions on $\mathcal{A}$ and $\mathcal{B}$, we obtain a lower bound for a number of integers $n$ not exceeding $x$ that can be represented as a sum $n = a_i + b_j$.

Let $\mathcal{C} =\{c_{n}\}_{n=1}^{\infty}$ be a sequence of positive integers (not necessarily distinct) and $r$ be a positive integer. We define
\[
\mathcal{C}(x)=\#\{ n\in \mathbb{N}: c_n \leq x\},\quad
\mathcal{C}(x,r)=\#\{n\in \mathbb{N}: c_{n}\leq x \text{ and } c_{n}+r\in \mathcal{C}\}.
\] We prove the following general result.

\begin{theorem}\label{T.GENERAL}
Let $\mathcal{A}=\{a_n\}_{n=1}^{\infty}$ and $\mathcal{B}=\{b_n\}_{n=1}^{\infty}$ be two sequences of positive integers. Suppose that $a_{n}< a_{n+1}$ for all $n$ and $b_{n},$ $n=1, 2, \ldots,$ are not necessarily distinct. Let
\[
\textup{ord}_{\mathcal{B}}(v)=\#\{j\in \mathbb{N}: b_j=v\}<\infty
\]for all $v\in \mathbb{N}$. Also, let
\begin{equation}\label{T1:Basic.1}
\mathcal{A}(x)\asymp \frac{x}{\eta(x)},\ \qquad\mathcal{A}(x/2)\gg \mathcal{A}(x)
\end{equation}and
\begin{equation}\label{BASIC.A.r}
\mathcal{A}(x,r) \ll \frac{r}{\varphi(r)} \frac{x}{\eta^{2}(x)}
\end{equation}for any positive integer $r$. Let
\begin{equation}
\mathcal{B}(x/2)\gg \mathcal{B}(x)\label{T1:Basic.2}
\end{equation}and
\begin{equation}\label{T1:Basic.3}
\sum_{\substack{j\in \mathbb{N}:\\ b_{j}\leq x}} \sum_{p\leq (\log x)^{\alpha}}\frac{\lambda_{\mathcal{B}} (x;j,p)\log p}{p} \ll \mathcal{B}(x)^{2}
\end{equation}for some $\alpha\in (0,1)$, where
\[\lambda_{\mathcal{B}}(x;j, p) = \#\{k\in \mathbb{N}: b_k \leq x\text{ and } b_k\equiv b_j\textup{ (mod $p$)}\}.
\]Set
\[
r(n)=\#\{(i,j)\in \mathbb{N}^{2}: a_i+b_j = n\}\quad\text{and}\quad \rho_{\mathcal{B}}(x)=\max_{1\leq v\leq x} \textup{ord}_{\mathcal{B}}(v).
\]Then there exist positive constants $c_1$ and $c_2$ depending only on $\alpha$ and the constants implied by the symbols $\ll$, $\gg$, and $\asymp$ in \eqref{T1:Basic.1} -- \eqref{T1:Basic.3} such that
\[
\#\Big\{n\leq x: r(n)\geq c_1 \frac{\mathcal{B}(x)}{\eta(x)}\Big\}\geq c_2 x \frac{\mathcal{B}(x)}{\mathcal{B}(x)+ \rho_{\mathcal{B}} (x)\eta(x)}.
\]
\end{theorem}

We will give some examples of sequences $\mathcal{A}$ that satisfy the hypotheses of Theorem \ref{T.GENERAL}.

\emph{Example} 1. Let $\mathcal{A}=\mathbb{P}$ be the set of prime numbers. By the prime number theorem
\[
\mathcal{A}(x)=\#\{p\leq x\} \sim \frac{x}{\log x}.
\]It is well-known (see, for example, \cite[Corollary 2.4.1]{Halberstam.Richert}) that
\[
\mathcal{A}(x, r)=\#\{p\leq x: p+r\in \mathbb{P}\}\ll \prod_{p|r} \Big(1-\frac{1}{p}\Big)^{-1} \frac{x}{(\log x)^{2}}= \frac{r}{\varphi (r)} \frac{x}{(\log x)^{2}},
\]where the constant implied by the $\ll$-symbol is absolute. Hence, the sequence $\mathcal{A}=\mathbb{P}$ satisfies the hypotheses \eqref{T1:Basic.1} and \eqref{BASIC.A.r} with $\eta(x)=\log x$.

\emph{Example} 2. Let $\mathcal{A} = \mathcal{S}'$ denote the set of odd integers $n$ that can be represented as the sum $n=u^{2} + v^{2}$ with $(u,v)=1$. To ease notation, we let $\langle P_1 \rangle$ denote the set of integers composed only of primes congruent to $1$ (mod $4$). It is well-known that $\mathcal{S}' = \langle P_1\rangle$ and (see, for example, \cite[Theorem 14.2]{Friedlander.Iwaniec})
\[
\mathcal{S}'(x)\sim  \frac{cx}{\sqrt{\log x}},
\]where
\[
c=\frac{1}{2\sqrt{2}}\prod_{p \equiv 3\textup{\,(mod $4$)}} \Big(1 - \frac{1}{p^{2}}\Big)^{-1/2}.
\] Also (see, for example, \cite[Corollary 2.3.4]{Halberstam.Richert})
\[
\mathcal{S}' (x, r) \ll \prod_{\substack{p|r\\ p \equiv 3 \textup{\,(mod $4$)}}} \Big(1 - \frac{1}{p}\Big)^{-1} \frac{x}{\log x} \leq \frac{r}{\varphi (r)} \frac{x}{\log x},
\]where the constant implied by the $\ll$-symbol is absolute. We see that the sequence $\mathcal{A}=\mathcal{S}'$ satisfies the hypotheses \eqref{T1:Basic.1} and \eqref{BASIC.A.r} with $\eta(x)=(\log x)^{1/2}$.

\begin{theorem}\label{Th.A.a.f.m}
Suppose that a sequence $\mathcal{A}=\{a_{n}\}_{n=1}^{\infty}$ satisfies the hypotheses of Theorem \ref{T.GENERAL}. Let $a\geq 2$ be an integer, and $f(n)=\gamma_{d}n^{d}+\ldots + \gamma_0$ be a polynomial with integer coefficients such that $\gamma_{d}>0$, $(\gamma_d,\ldots, \gamma_0)=1$, and $f: \mathbb{N} \to \mathbb{N}$. Put
\[
r(n)=\#\big\{ (k,m)\in \mathbb{N}^{2}: a_{k} + a^{f(m)}= n\big\}.
\]Then there exist positive constants $x_{0}$, $c_1$, and $c_2$ depending only on $a$, $f$, and the constants implied by the symbols $\ll$, $\gg$, $\asymp$ in \eqref{T1:Basic.1} and \eqref{BASIC.A.r} such that
\[
\#\Big\{n\leq x: r(n)\geq c_1 \frac{(\log x)^{1/d}}{\eta(x)}\Big\}\geq c_2 x \frac{(\log x)^{1/d}}{(\log x)^{1/d}+ \eta(x)}
\]for any $x \geq x_{0}$.
\end{theorem} Let $\mathcal{S}$ denote the set of integers representable as a sum of two squares. From Theorem \ref{Th.A.a.f.m} we obtain the following result.
 \begin{corollary}\label{COROLLARY.S.a.f.m}
 Let $a\geq 2$ be an integer, and $f(n)=\gamma_{d}n^{d}+\ldots + \gamma_0$ be a polynomial with integer coefficients such that $\gamma_{d}>0$, $(\gamma_d,\ldots, \gamma_0)=1$, and $f: \mathbb{N} \to \mathbb{N}$. Then there exist positive constants $x_0$, $c_1$, and $c_2$ depending only on $a$ and $f$ such that the following holds.

1) If $d\geq 2$, then
\begin{align}
\frac{c_1 x}{(\log x)^{1/2 - 1/d}}\leq \#\big\{n\leq x&: \text{ there exist $s\in \mathcal{S}$ and $m\in \mathbb{N}$} \notag\\
&\text{\ \ \ \,such that $s+a^{f(m)}=n$}\big\} \leq \frac{c_2 x}{(\log x)^{1/2 - 1/d}}\label{SQUARE.BASIC.EST}
\end{align} for any $x \geq x_0$.

2) If $d = 1$, then
\begin{equation}\label{LOW.d.1.a.f.m}
\#\{n\leq x: r(n)\geq c_{1} (\log x)^{1/2}\}\geq c_{1} x
\end{equation}for any $x \geq x_0$, where
\[
r(n):=\#\big\{(s,m)\in \mathcal{S}\times \mathbb{N}: s+ a^{f(m)}=n\big\}.
\]
\end{corollary}

The following result is true.
 \begin{theorem}\label{T.f.P}
 Suppose that a sequence $\mathcal{A}=\{a_{n}\}_{n=1}^{\infty}$ satisfies the hypotheses of Theorem \ref{T.GENERAL}. Let $f(n)=\gamma_{d}n^{d}+\ldots + \gamma_0$ be a polynomial with integer coefficients such that $\gamma_{d}>0$ and $f: \mathbb{N} \to \mathbb{N}$. Set
\[
r(n)=\#\{ (k,p)\in \mathbb{N}\times\mathbb{P}: a_{k}+ f(p)=n\}.
\]
 Then there exist positive constants $x_0$, $c_1$, and $c_2$ depending only on $f$ and the constants implied by the symbols $\ll$, $\gg$, $\asymp$ in \eqref{T1:Basic.1} and \eqref{BASIC.A.r} such that
\[
\#\Big\{n\leq x: r(n)\geq c_{1}\frac{x^{1/d}}{\eta (x) \log x}\Big\} \geq c_{2} x \frac{x^{1/d}}{x^{1/d} + \eta(x)\log x}
\]for any $x \geq x_0$.
\end{theorem}

 As applications of Theorem \ref{T.f.P}, we obtain
 \begin{corollary}\label{COROLLARY.f.P}
Let $f(n)=\gamma_{d}n^{d}+\ldots + \gamma_0$ be a polynomial with integer coefficients such that $\gamma_{d}>0$ and $f: \mathbb{N} \to \mathbb{N}$. Set
\[
r_{1}(n)=\#\{ (p,q)\in \mathbb{P}^{2}: p+ f(q)=n\}
\]and
\[
r_{2}(n)=\#\{ (s,p)\in \mathcal{S}\times \mathbb{P}: s+ f(p)=n\}.
\]
 Then there exist positive constants $x_0$, $c_1$, and $c_2$ depending only on $f$ such that
\begin{equation}\label{COROL.P}
\#\Big\{n\leq x: r_{1}(n)\geq c_{1}\frac{x^{1/d}}{(\log x)^{2}}\Big\} \geq c_{2} x
\end{equation}and
\begin{equation}\label{COROL.S}
\#\Big\{n\leq x: r_{2}(n)\geq c_{1}\frac{x^{1/d}}{(\log x)^{3/2}}\Big\} \geq c_{2} x
\end{equation}for any $x \geq x_0$.
\end{corollary}

 We obtain the following result.

 \begin{theorem}\label{T.a.f.p}
Suppose that a sequence $\mathcal{A}=\{a_{n}\}_{n=1}^{\infty}$ satisfies the hypotheses of Theorem \ref{T.GENERAL}.
Let $a\geq 2$ be an integer, and $f(n)=\gamma_{d}n^{d}+\ldots + \gamma_0$ be a polynomial with integer coefficients such that $\gamma_{d}>0$, $(\gamma_d,\ldots, \gamma_0)=1$, and $f: \mathbb{N} \to \mathbb{N}$. Set
\[
r(n)=\#\{(k,p)\in \mathbb{N}\times \mathbb{P}: a_{k} + a^{f(p)}=n\}.
\]Then there exist positive constants $x_0$, $c_1$, and $c_2$ depending only on $a$, $f$ and the constants implied by the symbols $\ll$, $\gg$, $\asymp$ in \eqref{T1:Basic.1} and \eqref{BASIC.A.r}  such that
\[
\#\Big\{n\leq x: r(n)\geq c_{1} \frac{ (\log x)^{1/d}}{\eta (x)\log\log x}\Big\}\geq c_{2} x
\frac{(\log x)^{1/d}}{(\log x)^{1/d}+ \eta(x)\log\log x}
\]for any $x \geq x_0$.
\end{theorem}

 From Theorem \ref{T.a.f.p} we deduce
 \begin{corollary}\label{CORROLARY.a.f.p}
Let $a\geq 2$ be an integer, and $f(n)=\gamma_{d}n^{d}+\ldots + \gamma_0$ be a polynomial with integer coefficients such that $\gamma_{d}>0$, $(\gamma_d,\ldots, \gamma_0)=1$, and $f: \mathbb{N} \to \mathbb{N}$. Set
\[
N(x) = \#\big\{n\leq x: \text{ there exist $s\in\mathcal{S}$ and $p\in \mathcal{\mathbb{P}}$  such that $s+a^{f(p)}=n$}\big\}
\]and
\[
r(n)=\#\{(s,p)\in \mathcal{S}\times \mathbb{P}: s+ a^{f(p)}=n\}.
\]Then there exist positive constants $x_0$, $c_1$, and $c_2$ depending only on $a$ and $f$ such that the following holds.

1) If $d\geq 2$, then
\begin{equation}\label{N.a.f.p.c1.c2}
\frac{c_{1}x}{(\log x)^{1/2-1/d}\log\log x} \leq N(x) \leq \frac{c_{2}x}{(\log x)^{1/2-1/d}\log\log x}
\end{equation}for any $x \geq x_0$.

2) If $d = 1$, then
\begin{equation}\label{r.a.f.p}
\#\Big\{n\leq x: r(n)\geq \frac{c_{1} (\log x)^{1/2}}{\log\log x}\Big\}\geq c_{1} x
\end{equation}for any $x \geq x_0$.
\end{corollary}

We prove

\begin{theorem}\label{T.f.S}
 Suppose that a sequence $\mathcal{A}=\{a_{n}\}_{n=1}^{\infty}$ satisfies the hypotheses of Theorem \ref{T.GENERAL}. Let $f(n)=\gamma_{d}n^{d}+\ldots + \gamma_0$ be a polynomial with integer coefficients such that $\gamma_{d}>0$ and $f: \mathbb{N} \to \mathbb{N}$. Set
\[
r(n)=\#\{ (k,s)\in \mathbb{N}\times \mathcal{S}: a_{k}+ f(s)=n\}.
\]
 Then there exist positive constants $x_0$, $c_1$, and $c_2$ depending only on $f$ and the constants implied by the symbols $\ll$, $\gg$, $\asymp$ in \eqref{T1:Basic.1} and \eqref{BASIC.A.r} such that
\begin{equation}\label{r.EST.s}
\#\Big\{n\leq x: r(n)\geq c_{1}\frac{x^{1/d}}{\eta (x) (\log x)^{1/2}}\Big\} \geq c_{2} x \frac{x^{1/d}}{x^{1/d} + \eta(x)(\log x)^{1/2}}
\end{equation}for any $x \geq x_0$.
\end{theorem}

 From Theorem \ref{T.f.S} we obtain
 \begin{corollary}\label{COROLLARY.f.S}
Let $f(n)=\gamma_{d}n^{d}+\ldots + \gamma_0$ be a polynomial with integer coefficients such that $\gamma_{d}>0$ and $f: \mathbb{N} \to \mathbb{N}$. Put
\[
r_{1}(n)=\#\{ (p,s)\in \mathbb{P}\times\mathcal{S}: p+ f(s)=n\}
\]and
\[
r_{2}(n)=\#\{ (s,l)\in \mathcal{S}^{2}: s+ f(l)=n\}.
\]
 Then there exist positive constants $c_{1}$, $c_{2}$, and $x_0$ depending only on $f$ such that
\begin{equation}\label{basic.s.1}
\#\Big\{n\leq x: r_{1}(n)\geq c_{1}\frac{x^{1/d}}{(\log x)^{3/2}}\Big\} \geq c_{2} x
\end{equation}and
\begin{equation}\label{basic.s.2}
\#\Big\{n\leq x: r_{2}(n)\geq c_{1}\frac{x^{1/d}}{\log x}\Big\} \geq c_{2} x
\end{equation}for any $x \geq x_0$.
\end{corollary}

Also, we prove the following result.

\begin{theorem}\label{T.a.f.s}
Suppose that a sequence $\mathcal{A}=\{a_{n}\}_{n=1}^{\infty}$ satisfies the hypotheses of Theorem \ref{T.GENERAL}.
Let $a\geq 2$ be an integer, and $f(n)=\gamma_{d}n^{d}+\ldots + \gamma_0$ be a polynomial with integer coefficients such that $\gamma_{d}>0$, $(\gamma_d,\ldots, \gamma_0)=1$, and $f: \mathbb{N} \to \mathbb{N}$. Set
\[
r(n)=\#\{(k,s)\in \mathbb{N}\times \mathcal{S}: a_{k} + a^{f(s)}=n\}.
\]Then there exist positive constants $x_0$, $c_1$, and $c_2$ depending only on $a$, $f$ and the constants implied by the symbols $\ll$, $\gg$, $\asymp$ in \eqref{T1:Basic.1} and \eqref{BASIC.A.r}  such that
\begin{equation}\label{a.f.s.STATE}
\#\Big\{n\leq x: r(n)\geq c_{1} \frac{ (\log x)^{1/d}}{\eta (x)(\log\log x)^{1/2}}\Big\}\geq c_{2} x
\frac{(\log x)^{1/d}}{(\log x)^{1/d}+ \eta(x)(\log\log x)^{1/2}}
\end{equation}for any $x \geq x_0$.
\end{theorem}

 As applications, we obtain
 \begin{corollary}\label{CORROLARY.a.f.s}
Let $a\geq 2$ be an integer, and $f(n)=\gamma_{d}n^{d}+\ldots + \gamma_0$ be a polynomial with integer coefficients such that $\gamma_{d}>0$, $(\gamma_d,\ldots, \gamma_0)=1$, and $f: \mathbb{N} \to \mathbb{N}$. Set
\[
N(x) = \#\big\{n\leq x: \text{ there exist $s, l\in\mathcal{S}$  such that $s+a^{f(l)}=n$}\big\}
\]and
\[
r(n)=\#\{(s,l)\in \mathcal{S}^{2}: s+ a^{f(l)}=n\}.
\]Then there exist positive constants $x_0$, $c_1$, and $c_2$ depending only on $a$ and $f$ such that the following holds.

1) If $d\geq 2$, then
\begin{equation}\label{N.a.f.s.c1.c2}
\frac{c_{1}x}{(\log x)^{1/2-1/d}(\log\log x)^{1/2}} \leq N(x) \leq \frac{c_{2}x}{(\log x)^{1/2-1/d}(\log\log x)^{1/2}}
\end{equation}for any $x \geq x_0$.

2) If $d = 1$, then
\begin{equation}\label{r.a.f.s}
\#\Big\{n\leq x: r(n)\geq c_{1}\Big(\frac{\log x}{\log\log x}\Big)^{1/2}\Big\}\geq c_{1} x
\end{equation}for any $x \geq x_0$.
\end{corollary}

We recall some facts on elliptic curves (for more details, see, for example, \cite[Chapter XXV]{Hardy.Wright}). An \emph{elliptic curve} is given by an equation of the form
\[
E: y^2 = x^3 + Ax+B,
\]with the one further requirement that the discriminant
\[
\Delta= 4A^3+ 27 B^2
\]should not vanish. For convenience, we shall generally assume that the coefficients $A$ and $B$ are integers. One of the properties that make an elliptic curve $E$ such a fascinating object is the existence of a composition law that allows us to 'add' points to one another. We adjoin an idealized point $\mathcal{O}$ to the plane. This point $\mathcal{O}$ is called the point at infinity. The special rules relating to the point $\mathcal{O}$ are
\[
P+(-P)=\mathcal{O}\quad \text{and}\quad P+\mathcal{O}=\mathcal{O}+P=P
\]for all points $P$ on $E$. For a prime $p$, by $\mathbb{F}_{p}$ we denote the field of congruence classes of residues modulo $p$. For $p\geq 3$, we define
\[
E(\mathbb{F}_{p})=\{(x,y)\in {\mathbb{F}}_{p}^2: y^2\equiv x^3+Ax+B\ \text{(mod $p$)}\}\cup\{\mathcal{O}\}.
\]Repeated addition and negation allows us to 'multiply' points of $E$ by an arbitrary integer $m$. This function from $E$ to itself is called the multiplication-by-$m$ map,
\[
\phi_{m}: E\to E,\qquad \phi_{m}(P)=mP=\text{\textup{sign}}(m)(P+\dots+P)
\](the sum contains $|m|$ terms). By convention, we also define $\phi_{0}(P)=\mathcal{O}$. The multiplication-by-$m$ map is defined by rational functions. Maps $E\to E$ defined by rational functions and sending $\mathcal{O}$ to $\mathcal{O}$  are called endomorphisms of $E$. For most elliptic curves (over the field of complex numbers $\mathbb{C}$), the only endomorphisms are the multiplication-by-$m$ maps. Curves that admit additional endomorphisms are said to have complex multiplication.

\begin{theorem}\label{T.ELLIPTIC.POLYNOMIAL}
Suppose that a sequence $\mathcal{A}=\{a_{n}\}_{n=1}^{\infty}$ satisfies the hypotheses of Theorem \ref{T.GENERAL}.
Let $E$ be an elliptic curve given by the equation $y^{2}= x^{3} +Ax + B$, where $A$ and $B$ are integers satisfying $\Delta = 4A^{3} + 27 B^{2}\neq 0$. Suppose that $E$ does not have complex multiplication. Let $f(n)=\gamma_{d} n^{d}+\ldots + \gamma_{0}$ be a polynomial with integer coefficients such that $\gamma_{d}>0$ and $f: \mathbb{N}\to \mathbb{N}$. For any positive integer $n$, we set
\[
r(n)=\#\{ (k, p)\in \mathbb{N}\times\mathbb{P}: p\geq 3\text{ and } a_{k}+ f(\#E(\mathbb{F}_{p}))= n \}.
\]Then there exist positive constants $x_0$, $c_1$, and $c_2$ depending only on $E$, $f$ and the constants implied by the symbols $\ll$, $\gg$, $\asymp$ in \eqref{T1:Basic.1} and \eqref{BASIC.A.r} such that
\[
\#\Big\{n\leq x: r(n) \geq c_1 \frac{x^{1/d}}{\eta(x)\log x} \Big\} \geq c_2 x \frac{x^{1/(2d)}}{x^{1/(2d)}+ \eta(x)\log x}
\]for any $x \geq x_0$.
\end{theorem}

From Theorem \ref{T.ELLIPTIC.POLYNOMIAL} we obtain
\begin{corollary}\label{COROLLARY.ELLIPTIC.POLYNOMIAL}
Let $E$ be an elliptic curve given by the equation $y^{2}= x^{3} +Ax + B$, where $A$ and $B$ are integers satisfying $\Delta = 4A^{3} + 27 B^{2}\neq 0$. Suppose that $E$ does not have complex multiplication. Let $f(n)=\gamma_{d} n^{d}+\ldots + \gamma_{0}$ be a polynomial with integer coefficients such that $\gamma_{d}>0$ and $f: \mathbb{N}\to \mathbb{N}$. For any positive integer $n$, we set
\[
r_{1}(n)=\#\{ (p, q)\in \mathbb{P}^{2}: q\geq 3\text{ and } p+ f(\#E(\mathbb{F}_{q}))= n \}
\]and
\[
r_{2}(n)=\#\{ (s, p)\in \mathcal{S}\times \mathbb{P}: p\geq 3\text{ and } s+ f(\#E(\mathbb{F}_{p}))= n \}.
\]Then there exist positive constants $x_0$, $c_1$, and $c_2$ depending only on $E$ and $f$ such that
\begin{equation}\label{REFER.ELL.1}
\#\Big\{n\leq x: r_{1}(n) \geq c_1 \frac{x^{1/d}}{(\log x)^{2}} \Big\} \geq c_2 x
\end{equation}and
\begin{equation}\label{REFER.ELL.2}
\#\Big\{n\leq x: r_{2}(n) \geq c_1 \frac{x^{1/d}}{(\log x)^{3/2}} \Big\} \geq c_2 x
\end{equation}for any $x \geq x_0$.
\end{corollary} Corollary \ref{COROLLARY.ELLIPTIC.POLYNOMIAL} extends Theorem 1.8 in \cite{Radomskii.Izv} which showed only  \eqref{REFER.ELL.1} with $f(n)=n$.

It is clear that $1\leq \varphi(n)\leq n$ for any positive integer $n$. Therefore, if $a_1,\ldots, a_{N}$ are positive integers (not necessarily distinct), $s\in \mathbb{N}$, then
 \[
\sum_{n=1}^{N} \biggl(\frac{a_{n}}{\varphi(a_{n})}\biggr)^{s}\geq N.
\] Upper bounds for such sums are of interest. We prove the following result.

\begin{theorem}\label{T.EULER.ELLIPTIC}
Let $E$ be an elliptic curve given by the equation $y^{2}= x^{3} +Ax + B$, where $A$ and $B$ are integers satisfying $\Delta = 4A^{3} + 27 B^{2}\neq 0$. Suppose that $E$ does not have complex multiplication. Let $f(n)=\gamma_{d} n^{d}+\ldots + \gamma_{0}$ be a polynomial with integer coefficients such that $\gamma_{d}>0$ and $f: \mathbb{N}\to \mathbb{N}$. Then there exists a constant $C=C(E,f)>0$ depending only on $E$ and $f$ such that for any $s\in \mathbb{N}$ we have
 \begin{equation}\label{EULER.BASIC.1}
\sum_{3 \leq p \leq x} \bigg(\frac{f(\#E(\mathbb{F}_{p}))}{\varphi(f(\#E(\mathbb{F}_{p})))}\bigg)^{s}
\leq \textup{exp}(s\log s +Cs)\pi(x)
\end{equation}for all $x\geq 3$.
\end{theorem}

\section{Notation}

We use $X\ll Y$, $Y\gg X$, or $X=O(Y)$ to denote the estimate $|X|\leq C Y$ for some constant $C>0$, and write $X\asymp Y$ for $X \ll Y \ll X$. The notation $X\sim Y$  means $\lim_{x\to \infty} X/Y = 1$.

 Let $(a,b)$ denote the greatest common divisor of integers $a$ and $b$. We write $\mathbb{Z}$ for the set of integers, $\mathbb{N}$ for the set of positive integers, $\mathbb{P}$ for the set of prime numbers, $\mathcal{S}$ for the set of numbers representable as the sum of two squares, and $\mathcal{S}'$ for the set of odd integers $n$ that can be represented as the sum $n=u^{2} + v^{2}$ with $(u,v)=1$.

 We reserve the letters $p$ and $q$ for primes. In particular, the sum $\sum_{p\leq K}$ should be interpreted as being over all prime numbers not exceeding $K$. By $\mathbb{F}_{p}$ we denote the field of congruence classes of residues modulo $p$. We write $\nu(n)$ for the number of distinct prime divisors of $n$ and $\varphi (n)$ for Euler's totient function (the order of the multiplicative group of reduced residue classes modulo $n$).

For a prime $p$ and an integer $a$ with $(a,p)=1$, let $h_{a}(p)$ denote the order of $a$ modulo $p$, which is to say that $h_{a}(p)$ is the least positive integer $h$ such that $a^{h}\equiv 1$ (mod $p$). By Fermat's theorem, $a^{p-1}\equiv 1$ (mod $p$), and hence $h_{a}(p)$ exists and $1\leq h_{a}(p) \leq p-1$. It is well-known that $h_{a}(p)| (p-1)$.

If $x$ is a real number, then $[x]$ denotes its integral part and $\{x\}= x - [x]$ its fractional part. By $\# A$ we denote the number of elements of a finite set $A$. We denote the number of primes not exceeding $x$ by $\pi (x)$ and the number of primes not exceeding $x$ that are congruent to $l$ modulo $k$ by $\pi (x; k, l)$.

\section{Proof of Theorem \ref{T.GENERAL}}

\begin{lemma}\label{Lemma.Euler}
Let $0<\alpha <1$. Then there exists a constant $C(\alpha)>0$ depending only on $\alpha$ such that the following holds. Let $a_1,\ldots, a_{N}$ be positive integers \textup{(}not necessarily distinct\textup{)} with $a_{n}\leq M$ for all $1\leq n \leq N$. Set
\[
\omega(d)=\#\{1\leq n\leq N: a_{n}\equiv 0\ \textup{\text{(mod $d$)}}\}
\]for any positive integer $d$. Let $s$ be a positive integer. Then
\[
\sum_{n=1}^{N}\left(\frac{a_{n}}{\varphi(a_{n})}\right)^{s}\leq (C(\alpha))^{s}
\biggl(N+\sum_{p\leq (\log M)^{\alpha}} \frac{\omega(p)(\log p)^{s}}{p}\biggr).
\]
\begin{proof} This is \cite[Theorem 1.1]{Radomskii.Izv}.
\end{proof}
\end{lemma}

We first obtain an upper bound for $\sum_{n\leq x} r(n)^{2}$. We observe that
\[
\sum_{n \leq x} r(n)^{2} = \sum_{\substack{(i_1, i_2, j_1, j_2)\in \mathbb{N}^{4}:\\
a_{i_{1}}+ b_{j_{1}} \leq x\\ a_{i_1} + b_{j_1} = a_{i_2} + b_{j_2} }}1= \sum_{\substack{\ldots\\ i_{1}=i_{2}}}1+
\sum_{\substack{\ldots\\ i_{1}<i_{2}}}1 + \sum_{\substack{\ldots\\ i_{1}>i_{2}}}1 = S_{1}+ S_{2}+ S_{3}.
\] We have
\begin{align}
S_{1} = \sum_{\substack{i\in \mathbb{ N}:\\ a_{i}\leq x}} \sum_{\substack{j\in \mathbb{N}:\\ b_{j}\leq x- a_{i}}}
\sum_{\substack{k\in \mathbb{N}:\\ b_{k}= b_{j}}}1 &=
\sum_{\substack{i\in \mathbb{ N}:\\ a_{i}\leq x}} \sum_{\substack{j\in \mathbb{N}:\\ b_{j}\leq x- a_{i}}} \textup{ord}_{\mathcal{B}}(b_j)\notag\\
&\leq \rho_{\mathcal{B}}(x)\mathcal{B}(x)\mathcal{A}(x)\ll \frac{x}{\eta(x)} \rho_{\mathcal{B}}(x)\mathcal{B}(x).\label{S1.FINAL}
\end{align}

It is easy to see that $S_{2} = S_{3}$. Now we estimate $S_{2}$. We see that
\begin{equation}\label{S2.EST.UP}
S_2 \leq \sum_{\substack{j, k\in \mathbb{N}:\\ b_{j}< b_{k}\leq x}} \mathcal{A}(x, b_{k} - b_{j}) \ll \frac{x}{\eta^{2}(x)} \sum_{\substack{j, k\in \mathbb{N}:\\ b_{j}< b_{k} \leq x}} \frac{b_{k} - b_{j}}{\varphi (b_{k}- b_{j})},
\end{equation}by \eqref{BASIC.A.r}.

Fix $j\in \mathbb{N}$ such that $b_{j} < x$. Applying Lemma \ref{Lemma.Euler} with $s=1$, $\alpha$ given by \eqref{T1:Basic.3} and $M=x$, we obtain
\begin{align*}
\sum_{\substack{k\in \mathbb{N}:\\ b_{j}< b_{k} \leq x}} \frac{b_{k} - b_{j}}{\varphi (b_{k}- b_{j})}
&\leq C(\alpha) \Big(\sum_{\substack{k\in \mathbb{N}:\\ b_{j}< b_{k} \leq x}}1 + \sum_{p \leq (\log x)^{\alpha}}\frac{\lambda_{\mathcal{B}} (x;j, p)\log p}{p}\Big)\\
&\ll \mathcal{B}(x) + \sum_{p \leq (\log x)^{\alpha}}\frac{\lambda_{\mathcal{B}} (x;j, p)\log p}{p}.
\end{align*} From \eqref{S2.EST.UP} and \eqref{T1:Basic.3} we get
\begin{equation}\label{S2.EST.UP.FINAL}
S_{2} \ll \frac{x}{\eta(x)^{2}}\sum_{\substack{j\in \mathbb{N}:\\ b_{j} < x}} \Big(\mathcal{B}(x) + \sum_{p \leq (\log x)^{\alpha}}\frac{\lambda_{\mathcal{B}} (x;j, p)\log p}{p}\Big) \ll \frac{x}{\eta(x)^{2}}\mathcal{B}(x)^{2}.
\end{equation}We find from \eqref{S1.FINAL} and \eqref{S2.EST.UP.FINAL} that
\begin{equation}\label{r.SECOND.MOMENT}
\sum_{n \leq x} r(n)^{2} \ll \frac{x}{\eta(x)^{2}} \mathcal{B}(x)(\rho_{\mathcal{B}}(x) \eta(x)+ \mathcal{B}(x)).
\end{equation}

Now we obtain a lower bound for $\sum_{n\leq x} r(n)$. By \eqref{T1:Basic.1} and \eqref{T1:Basic.2} we have
\[
\sum_{n\leq x} r(n)\geq \mathcal{A}(x/2) \mathcal{B}(x/2) \geq c \frac{x}{\eta(x)} \mathcal{B}(x),
\]where $c$ is a positive constant depending only on the constants implied by the symbols $\asymp$ and $\gg$ in \eqref{T1:Basic.1} and \eqref{T1:Basic.2}. Since
\[
\sum_{\substack{n\leq x:\\ r(n)< (c \mathcal{B}(x))/(2 \eta(x))}} r(n)< \frac{c}{2}\frac{\mathcal{B}(x)}{\eta(x)} \sum_{\substack{n\leq x:\\ r(n)< (c \mathcal{B}(x))/(2 \eta(x))}} 1 \leq \frac{c}{2} \frac{x}{\eta(x)} \mathcal{B}(x),
\]we obtain
\[
\sum_{\substack{n\leq x:\\ r(n)\geq (c \mathcal{B}(x))/(2 \eta(x))}} r(n) \geq \frac{c}{2} \frac{x}{\eta(x)} \mathcal{B}(x).
\]

By Cauchy's inequality and \eqref{r.SECOND.MOMENT}, we have
\begin{align*}
\frac{c}{2} \frac{x}{\eta(x)} \mathcal{B}(x) &\leq \sum_{\substack{n\leq x:\\ r(n)\geq (c \mathcal{B}(x))/(2 \eta(x))}} r(n)
\leq \Big(\sum_{n \leq x} r(n)^{2}\Big)^{1/2} \Big(\sum_{\substack{n\leq x:\\ r(n)\geq (c \mathcal{B}(x))/(2 \eta(x))}} 1\Big)^{1/2}\\
&\ll \frac{(x \mathcal{B}(x)(\rho_{\mathcal{B}}(x)\eta(x)+ \mathcal{B}(x)))^{1/2}}{\eta(x)}\Big(\sum_{\substack{n\leq x:\\ r(n)\geq (c \mathcal{B}(x))/(2 \eta(x))}} 1\Big)^{1/2}.
\end{align*} We obtain
\[
\#\Big\{n\leq x: r(n) \geq \frac{c}{2} \frac{\mathcal{B}(x)}{\eta(x)}\Big\}\gg x \frac{\mathcal{B}(x)}{\mathcal{B}(x)+ \rho_{\mathcal{B}}(x)\eta(x)}.
\]Theorem \ref{T.GENERAL} is proved.

\section{Proof of Theorem \ref{Th.A.a.f.m} and Corollary \ref{COROLLARY.S.a.f.m}}

\begin{lemma}\label{L1}
Let $d$ and $m$ be positive integers. Let
\[
f(x)=\sum_{i=0}^{d} b_{i} x^{i},
\]where $b_{0},\ldots, b_d$ are integers with $(b_{0},\ldots, b_d, m)=1$. Let $\rho (f,m)$ denote the number of solutions of the congruence $f(x)\equiv 0$ \textup{(mod $m$)}. Then
\[
\rho (f, m)\leq c d m^{1-1/d},
\]where $c>0$ is an absolute constant.
\end{lemma}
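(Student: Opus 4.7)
My plan is to reduce to prime power moduli via the Chinese Remainder Theorem and then establish a local bound by Hensel-type lifting.

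Write $m = \prod_{i=1}^{s} p_i^{k_i}$. The CRT yields the multiplicativity $\rho(f, m) = \prod_i \rho(f, p_i^{k_i})$. The hypothesis $(b_0, \ldots, b_d, m) = 1$ passes to each prime power factor, since if some $p_i$ divided every coefficient $b_j$, it would divide the gcd and contradict the assumption. Hence it suffices to bound $\rho(f, p^k)$ for each prime power with the content of $f$ coprime to $p$.

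For the prime power bound, I argue by induction on $k$. The base $k = 1$ is Lagrange's theorem: $\rho(f, p) \leq d$. For the inductive step, any solution $x \pmod{p^k}$ can be written $x = y + p^{k-1} t$ with $y$ a solution mod $p^{k-1}$ and $t \in \{0, \ldots, p-1\}$ satisfying the linear congruence
\[
p^{-(k-1)} f(y) + t f'(y) \equiv 0 \pmod{p}
\]
coming from the Taylor expansion of $f$ at $y$. Each $y$ thus admits $0$, $1$, or $p$ lifts, with the ``$p$ lifts'' branch occurring exactly when $p \mid f'(y)$ and $p^k \mid f(y)$. A Newton polygon analysis of $f$ at its $p$-adic roots controls how often this branch can be taken and produces a local bound of the form $\rho(f, p^k) \leq C_1 d \, p^{k(1-1/d)}$ with $C_1$ an absolute constant. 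The extremal case is $f(x) = x^d$, where the solutions mod $p^k$ are precisely $x \equiv 0 \pmod{p^{\lceil k/d \rceil}}$, giving exactly $p^{k - \lceil k/d \rceil}$ solutions.

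The main obstacle is assembling these local bounds into a global bound of the claimed form $cd \, m^{1-1/d}$. A naive product gives $(C_1 d)^{\omega(m)} m^{1-1/d}$, which is exponential in $\omega(m)$ and far too weak. To avoid this, one distinguishes between primes $p$ with $p^{k_p} \geq d^{d/(d-1)}$---for which $d \leq p^{k_p(1-1/d)}$, so the sharper Hensel bound $\rho(f, p^{k_p}) \leq d$ (valid whenever $f$ has simple roots mod $p$) produces a factor at most $1$ in the ratio $\rho(f,p^{k_p})/p^{k_p(1-1/d)}$---and the boundedly many small prime powers, whose joint contribution is controlled in terms of $d$ alone using the trivial bound $\rho(f, p^{k_p}) \leq p^{k_p}$. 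This careful accounting, in the spirit of the classical arguments of Stewart and Konyagin on polynomial congruences, yields the desired bound with $c$ absolute.
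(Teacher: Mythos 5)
The paper does not actually prove this lemma: it is quoted verbatim as \cite[Theorem 2]{Konyagin}, so what you have undertaken is a reproof of a nontrivial published theorem. Your CRT reduction and the treatment of the small prime powers (the product of $p^{k_p/d}$ over $p^{k_p}<d^{d/(d-1)}$ is indeed $O(1)$) are fine, but the two load-bearing steps are not established.

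First, the local bound $\rho(f,p^k)\leq C_1 d\,p^{k(1-1/d)}$ is asserted rather than proved. The phrase ``a Newton polygon analysis \dots controls how often this branch can be taken'' is where all the work lives: when $f$ has repeated roots in $\mathbb{Z}_p$ the lifting tree branches with multiplicity $p$ at many levels, and showing that the exponent $1-1/d$ survives with an absolute constant is precisely the content of the Stewart--Konyagin local estimate. Second, and more seriously, your globalization rests on a false claim. The bound $\rho(f,p^{k})\leq d$ does \emph{not} hold for large prime powers in general: for $f(x)=x^2$ one has $\rho(f,p^3)=p$, which exceeds $d=2$ for every $p>2$, and $p^3$ can be arbitrarily large. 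Your parenthetical caveat ``valid whenever $f$ has simple roots mod $p$'' does not rescue the argument, because the set of primes modulo which $f$ has a repeated root is governed by the resultant of $f$ and $f'$, whose number of prime divisors grows with the height of the coefficients of $f$ and is not bounded in terms of $d$. Since the constant $c$ in the lemma must be absolute and uniform in $f$, these ``bad'' primes cannot be absorbed into a bounded exceptional set, and for each of them you only have the local bound with its factor $C_1 d$; the product of these factors is exactly the $(C_1 d)^{\omega(m)}$ blow-up you correctly identified as the obstacle. Resolving this is the actual point of Konyagin's theorem, and your sketch does not resolve it. As written, the argument should be replaced by the citation the paper uses, or else the local estimate and the handling of primes of bad reduction must be carried out in full.
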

\begin{proof}
This is \cite[Theorem 2]{Konyagin}.
\end{proof}

\begin{lemma}\label{LEMMA.h.a.p}
Let $a\geq 2$ be an integer and let $\varepsilon>0$. Then
\[
\sum_{\substack{p:\\ (p,a)=1}}\frac{\log p}{p(h_{a}(p))^{\varepsilon}}\ll 1,
\]where the constant implied by the $\ll$-symbol depends only on $a$ and $\varepsilon$.
\end{lemma}
\begin{proof} Suppose that $N\geq 100$. We have
\[
\sum_{p|N} \frac{\log p}{p} \leq \sum_{p\leq \log N} \frac{\log p}{p} + \sum_{\substack{p|N\\ p> \log N}} \frac{\log p}{p}= S_{1}+S_{2}.
\]By Mertens' Theorem,
\[
S_{1} = \log\log N + O(1).
\] Since the function $h(x) = \log x / x$ is decreasing on the interval $(e, +\infty)$, we obtain
\[
S_{2} \leq \frac{\log\log N}{\log N}\,\nu (N) \leq c_{0}.
\] We have
\begin{equation}\label{MERTENS.LOG}
\sum_{p|N} \frac{\log p}{p} \leq \log\log N +c_{1} \leq c \log\log N,
\end{equation}where $c= c_{1}+1>0$ is an absolute constant.

We set
\[
z_{n} = \sum_{\substack{p:\\ (p,a)=1\\ h_{a}(p)=n}} \frac{\log p}{ p },\qquad G(t)=\sum_{n\leq t} z_{n},
\qquad P(t)=\prod_{n \leq t} (a^{n}-1).
\]Assume that $t$ is sufficiently large (in terms of $a$ and $\varepsilon$). If $h_{a}(p) = n \leq t$, then $p|P(t)$. Since $P(t) \leq a^{t^{2}}$, from \eqref{MERTENS.LOG} we obtain
\[
G(t)\leq \sum_{p|P(t)} \frac{\log p}{p} \ll \log \log P(t)\ll \log t.
\]

Applying summation by parts, we get
\[
\sum_{\substack{p:\\ (p,a)=1}} \frac{\log p}{p(h_{a}(p))^{\varepsilon}} = \sum_{n=1}^{\infty}
\sum_{\substack{p:\\ (p,a)=1\\h_{a}(p)=n}} \frac{\log p}{p(h_{a}(p))^{\varepsilon}}=
\sum_{n=1}^{\infty}\frac{z_{n}}{n^\varepsilon} = \varepsilon\int_{1}^{+\infty}\frac{G(t)}{t^{1+\varepsilon}}\, dt
\ll 1.
\] Lemma \ref{LEMMA.h.a.p} is proved.
\end{proof}

\begin{lemma}\label{L2}
 Let $a\geq 2$ be an integer, and $f(n)=\gamma_{d}n^{d}+\ldots + \gamma_0$ be a polynomial with integer coefficients such that $\gamma_{d}>0$, $(\gamma_d,\ldots, \gamma_0)=1$, and $f: \mathbb{N} \to \mathbb{N}$. Set $\mathcal{B}=\{b_{n}\}_{n=1}^{\infty}$, where $b_n = a^{f(n)}$. Then for sufficiently large $x$ (in terms of $a$ and $f$) we have
 \begin{equation}\label{B.x.EST.POLYNOMIAL}
 \mathcal{B}(x)\asymp (\log x)^{1/d},\qquad 1\leq \rho_{\mathcal{B}}(x) \leq d,
 \end{equation}and
 \begin{equation}\label{LAMBDA.POLYNOMIAL.EST}
\sum_{\substack{j\in \mathbb{N}:\\ b_j \leq x}} \sum_{p\leq (\log x)^{1/(2d)}} \frac{\lambda_{\mathcal{B}} (x;j, p)\log p}{p}
\ll  \mathcal{B}(x)^{2},
\end{equation} where the constants implied by the symbols $\asymp$ and $\ll$ depend only on $a$ and $f$.
\end{lemma}

\begin{proof}
 Since the equation $f(x)=c$ has at most $d$ roots, we have $\textup{ord}_{\mathcal{B}}(v) \leq d$ for any positive integer $v$. Hence
 \[
 1\leq \rho_{\mathcal{B}}(x) \leq d,
 \]if $x$ is sufficiently large in terms of $a$ and $f$.

  There exist positive constants $c_1$ and $c_2$ depending only on $f$ such that
\begin{equation}\label{POLYNOMIAL.c.1.c.2}
c_1 n^{d} \leq f(n) \leq c_2 n^{d}
\end{equation}for any $n\in \mathbb{N}$. We have
\begin{align*}
\mathcal{B}(x) &=\#\big\{n\in \mathbb{N}: a^{f(n)}\leq x\big\}=
\#\Big\{n\in \mathbb{N}: f(n)\leq \frac{\log x}{\log a}\Big\}\\
&\leq \#\Big\{n\in \mathbb{N}: c_{1} n^{d}\leq \frac{\log x}{\log a}\Big\}
\ll (\log x)^{1/d}.
\end{align*} Similarly,
\[
\mathcal{B}(x) \geq \#\Big\{n\in \mathbb{N}: c_{2} n^{d}\leq \frac{\log x}{\log a}\Big\}\gg (\log x)^{1/d}.
\] Thus, \eqref{B.x.EST.POLYNOMIAL} is proved.

Fix $j\in \mathbb{N}$ such that $b_j \leq x$ and $p\leq (\log x)^{1/ (2 d)}$. Since
\[
\lambda_{\mathcal{B}} (x;j, p)=\#\{k\in \mathbb{N}: b_{k}\leq x\text{ and $b_{k} \equiv b_{j}$ (mod $p$)}\}\leq \mathcal{B}(x),
\] we get
\begin{equation}\label{Ex.3.Sum.1}
\sum_{\substack{p\leq (\log x)^{1/(2d)}\\ p|a}} \frac{\lambda_{\mathcal{B}} (x;j, p)\log p}{p} \leq \mathcal{B}(x)
\sum_{p|a} \frac{\log p}{p}=c(a) \mathcal{B}(x).
\end{equation}

 Suppose that $(p,a)=1$. Let $\alpha_{1},\ldots, \alpha_{l}$ be all solutions of the congruence $f(x) \equiv f(j)$ (mod $h_{a}(p)$). By Lemma \ref{L1}, we have
 \begin{equation}\label{l.est.good}
 l\leq c d (h_{a}(p))^{1-1/d}.
 \end{equation}  Applying \eqref{POLYNOMIAL.c.1.c.2}, we obtain
\begin{align}
\lambda_{\mathcal{B}} (x;j, p) &= \#\{ k\in \mathbb{N}: a^{f(k)}\leq x\text{ and $a^{f(k)}\equiv a^{f(j)}$ (mod $p$)}\}\notag\\
&\leq \#\{ 1\leq k \leq [(\log x / (c_1 \log a))^{1/d}]: f(k)\equiv f(j)\text{ (mod $h_{a}(p)$)}\}\notag\\
&=\sum_{i=1}^{l} \#\{ 1\leq k \leq [(\log x / (c_1 \log a))^{1/d}]: k\equiv \alpha_{i}\text{ (mod $h_{a}(p)$)}\}.\label{lambda.est.good}
\end{align}

 Since
\[
h_{a}(p)\leq p-1 < (\log x)^{1/(2d)}\leq [(\log x / (c_1 \log a))^{1/d}],
\]for any $1\leq i \leq l$ we have (see \eqref{B.x.EST.POLYNOMIAL})
\[
\#\{ 1\leq k \leq [(\log x / (c_1 \log a))^{1/d}]: k\equiv \alpha_{i}\text{ (mod $h_{a}(p)$)}\}\ll \frac{(\log x)^{1/d}}{h_{a}(p)} \ll \frac{\mathcal{B}(x)}{h_{a}(p)}.
\]From \eqref{l.est.good} and \eqref{lambda.est.good} we obtain
\[
\lambda_{\mathcal{B}}(x;j, p) \ll \frac{\mathcal{B}(x)}{(h_{a}(p))^{1/d}},
\]where the constant implied by the $\ll$-symbol depends on $a$ and $f$ only.

Using Lemma \ref{LEMMA.h.a.p}, we obtain
\begin{align}
\sum_{\substack{p\leq (\log x)^{1/(2d)}\\ (p,a)=1}} \frac{\lambda_{\mathcal{B}} (x;j, p)\log p}{p}
&\ll \mathcal{B}(x) \sum_{\substack{p\leq (\log x)^{1/(2d)}\\ (p,a)=1}} \frac{\log p}{p (h_{a}(p))^{1/d}}\notag\\
&\leq \mathcal{B}(x) \sum_{\substack{p:\\ (p,a)=1}} \frac{\log p}{ p (h_{a}(p))^{1/d}}
\ll \mathcal{B}(x).\label{Ex.3.Sum.2}
\end{align}Gathering \eqref{Ex.3.Sum.1} and \eqref{Ex.3.Sum.2} together, we get
\[
\sum_{p\leq (\log x)^{1/(2d)}} \frac{\lambda_{\mathcal{B}} (x;j, p)\log p}{p}
\ll \mathcal{B}(x),
\] and hence
\[
\sum_{\substack{j\in \mathbb{N}:\\ b_j\leq x}} \sum_{p\leq (\log x)^{1/(2d)}} \frac{\lambda_{\mathcal{B}} (x;j, p)\log p}{p}
\ll \mathcal{B}(x) \sum_{\substack{j\in \mathbb{N}:\\ b_j \leq x}} 1= \mathcal{B}(x)^{2},
\] where the constant implied by the $\ll$-symbol depends only on $a$ and $f$. Lemma \ref{L2} is proved.
 \end{proof}
 We see from \eqref{B.x.EST.POLYNOMIAL} and \eqref{LAMBDA.POLYNOMIAL.EST}  that the sequence $\mathcal{B}$ satisfies the hypotheses \eqref{T1:Basic.2} and \eqref{T1:Basic.3} with $\alpha = 1/(2d)$. Theorem \ref{Th.A.a.f.m} follows from Theorem \ref{T.GENERAL} and Lemma \ref{L2}.

\begin{proof}[Proof of Corollary \ref{COROLLARY.S.a.f.m}.] Suppose that $d\geq 2$. Let
 \[
 N(x) = \#\{n\leq x:\text{ there exist $s\in \mathcal{S}$ and $m\in \mathbb{N}$ such that $s+a^{f(m)}=n$}\}
 \]and
 \[
 N'(x) = \#\{n\leq x:\text{ there exist $s\in \mathcal{S}'$ and $m\in \mathbb{N}$ such that $s+a^{f(m)}=n$}\}.
 \]
 Set
 \[
 \mathcal{A}=\mathcal{S}'\qquad\text{and}\qquad \mathcal{B}=\big\{a^{f(m)}: m\in \mathbb{N}\big\}.
 \]
  By Example 2, the sequence $\mathcal{A}$ satisfies the hypotheses \eqref{T1:Basic.1} and \eqref{BASIC.A.r} with $\eta(x)=(\log x)^{1/2}$.
 Applying Theorem \ref{Th.A.a.f.m}, we obtain
 \[
 N'(x) \gg \frac{x}{(\log x)^{1/2 - 1/d}},
 \]where the constant implied by the $\gg$-symbol depends only on $a$ and $f$. Since $N'(x)\leq N(x)$, the first inequality in \eqref{SQUARE.BASIC.EST} is proved. Also,
 \[
 N(x)\leq \mathcal{S}(x) \mathcal{B}(x)\ll \frac{x}{(\log x)^{1/2}} (\log x)^{1/d} = \frac{x}{(\log x)^{1/2 - 1/d}}.
 \]

 Now suppose that $d=1$, and let
 \[
 r'(n)=\#\{(s,m)\in \mathcal{S}'\times \mathbb{N}: s+a^{f(m)}= n\}.
 \]Using Theorem \ref{Th.A.a.f.m}, we obtain
 \[
 \#\{n\leq x: r'(n) \geq c_{1}(\log x)^{1/2}\}\gg x.
 \] Since $r'(n)\leq r(n)$, we obtain \eqref{LOW.d.1.a.f.m}. This completes the proof of Corollary \ref{COROLLARY.S.a.f.m}.
 \end{proof}

 \section{Proof of Theorem \ref{T.f.P} and Corollary \ref{COROLLARY.f.P}}

 \begin{lemma}[The Brun - Titchmarsh theorem]\label{BRUN.TITCHMARSCH}
 Let $0< a < 1$, $x\geq 2$, $1\leq k \leq x^{a}$, and $(l,k)=1$. Then there is a constant $C(a)>0$ depending only on $a$ such that
 \[
 \pi (x;k,l) < C(a) \frac{x}{\varphi (k)\log x}.
 \]
 \end{lemma}
 \begin{proof}
 This Lemma follows immediately from \cite[Theorem 3.8]{Halberstam.Richert}.
 \end{proof}

 \begin{lemma}\label{LEMMA.f.PRIME}
  Let $f(n)=\gamma_{d} n^{d}+\ldots + \gamma_{0}$ be a polynomial with integer coefficients such that $\gamma_{d}>0$ and $f: \mathbb{N}\to \mathbb{N}$. Set $\mathcal{B}=\{f(p): p\in \mathbb{P}\}$. Then for sufficiently large $x$ (in terms of $f$) we have
 \begin{equation}\label{B.x.POLYN.LEMMA}
 \mathcal{B}(x)\asymp \frac{x^{1/d}}{\log x},\qquad 1\leq \rho_{\mathcal{B}}(x) \leq d,
 \end{equation}and
 \[
\sum_{\substack{j\in \mathbb{P}:\\ f(j) \leq x}} \sum_{p\leq (\log x)^{1/2}} \frac{\lambda_{\mathcal{B}} (x;j, p)\log p}{p}
\ll  \mathcal{B}(x)^{2},
\] where the constants implied by the symbols $\asymp$ and $\ll$ depend only on the polynomial $f$.
 \end{lemma}
 \begin{proof}
 Since the equation $f(x)=v$ has at most $d$ roots, we see that $\textup{ord}_{\mathcal{B}}(v)\leq d$ for any positive integer $v$. Therefore
 \[
 1\leq \rho_{\mathcal{B}}(x) \leq d,
 \]if $x$ is sufficiently large in terms of $f$. Applying \eqref{POLYNOMIAL.c.1.c.2}, we have
 \begin{equation}\label{B.PRIMES.LOW}
 \mathcal{B}(x)=\#\{p: f(p)\leq x\}\leq \#\{p: c_{1}p^{d}\leq x\}\ll \frac{x^{1/d}}{\log x},
 \end{equation}and
 \[
 \mathcal{B}(x)\geq \#\{p: c_{2}p^{d}\leq x\}\gg \frac{x^{1/d}}{\log x},
 \]where the constants implied by the symbols $\ll$ and $\gg$ depend only on $f$. Thus, \eqref{B.x.POLYN.LEMMA} is proved.

 Fix $j\in \mathbb{P}$ such that $f(j)\leq x$ and $p\leq (\log x)^{1/2}$. Suppose that $p> \gamma_{d}$, where $\gamma_{d}$ is the leading coefficient of the polynomial $f$. Let $\alpha_{1}, \ldots, \alpha_{l}$ be all solutions of the congruence $f(x) \equiv f(j)$ (mod $p$). By Lagrange's Theorem, $l\leq d$. Set
 \[
 N=\Big[\Big(\frac{x}{c_{1}}\Big)^{1/d}\Big],
 \]where $c_{1}$ is the constant from \eqref{POLYNOMIAL.c.1.c.2}. By \eqref{B.PRIMES.LOW}, we have
 \begin{align}
 \lambda_{\mathcal{B}}(x;j, p)&=\#\{q: f(q)\leq x\text{ and } f(q)\equiv f(j)\text{ (mod $p$)}\}\notag\\
 &= \sum_{i=1}^{l}\#\{q: f(q)\leq x\text{ and } q\equiv \alpha_{i}\text{ (mod $p$)}\}\notag\\
 &\leq \sum_{i=1}^{l}\#\Big\{q\leq \Big(\frac{x}{c_{1}}\Big)^{1/d}: q\equiv \alpha_{i}\text{ (mod $p$)}\Big\}=\sum_{i=1}^{l} \pi(N;p,\alpha_{i}).\label{LAMBD.POLY.EST}
 \end{align}

 Fix $1 \leq i \leq l$. We have
 \[
 p\leq (\log x)^{1/2} \leq N^{1/2}.
 \] If $(\alpha_{i}, p) = 1$, then by Lemma \ref{BRUN.TITCHMARSCH} with $a=1/2$ we have
 \[
 \pi(N;p,\alpha_{i})
 \leq C(f) \frac{x^{1/d}}{p \log x},
 \]where $C(f)>0$ is a constant depending only on $f$. If $(\alpha_{i}, p)>1$, then
 \[
 \pi(N;p,\alpha_{i})
 \leq 1 \leq C(f) \frac{x^{1/d}}{(\log x)^{3/2}} \leq C(f) \frac{x^{1/d}}{p \log x}.
 \]Hence,
 \[
 \pi(N;p,\alpha_{i})
 \leq C(f) \frac{x^{1/d}}{p \log x}
 \]in both cases $(\alpha_{i}, p)=1$ and $(\alpha_{i}, p)>1$. By \eqref{LAMBD.POLY.EST} and \eqref{B.x.POLYN.LEMMA}, we get
 \[
 \lambda_{\mathcal{B}}(x;j,p)\ll \frac{\mathcal{B}(x)}{p}.
 \]

 We obtain
 \begin{equation}\label{LAMBDA.EST.POLY.I}
 \sum_{\gamma_{d}< p \leq (\log x)^{1/2}} \frac{\lambda_{\mathcal{B}}(x;j, p)\log p}{p}
 \ll \mathcal{B}(x)\sum_{\gamma_{d}< p \leq (\log x)^{1/2}} \frac{\log p}{p^{2}}< \mathcal{B}(x)\sum_{p}\frac{\log p}{p^{2}}\ll \mathcal{B}(x).
 \end{equation}Since
 \[
 \lambda_{\mathcal{B}}(x;j,p)\leq \mathcal{B}(x),
 \]we also have
 \begin{equation}\label{LAMBDA.EST.POLY.II}
 \sum_{p\leq \gamma_{d}} \frac{\lambda_{\mathcal{B}}(x;j, p)\log p}{p}
 \leq \mathcal{B}(x)\sum_{p\leq \gamma_{d}} \frac{\log p}{p}\ll \mathcal{B}(x),
 \end{equation}where the constant implied by the symbol $\ll$ depends only on $f$. Gathering \eqref{LAMBDA.EST.POLY.I} and \eqref{LAMBDA.EST.POLY.II} together, we obtain
 \[
 \sum_{p \leq (\log x)^{1/2}} \frac{\lambda_{\mathcal{B}}(x;j, p)\log p}{p}
 \ll \mathcal{B}(x),
 \]and hence
 \[
  \sum_{\substack{j\in \mathbb{P}:\\ f(j)\leq x}}\sum_{p \leq (\log x)^{1/2}} \frac{\lambda_{\mathcal{B}}(x;j, p)\log p}{p} \ll \mathcal{B}(x)\sum_{\substack{j\in \mathbb{P}:\\ f(j)\leq x}} 1=
  \mathcal{B}(x)^{2}.
 \]Lemma \ref{LEMMA.f.PRIME} is proved.
 \end{proof}
  Theorem \ref{T.f.P} follows from Theorem \ref{T.GENERAL} and Lemma \ref{LEMMA.f.PRIME}.

\begin{proof}[Proof of Corollary \ref{COROLLARY.f.P}.] We set
 \[
 \mathcal{A}_{1}=\mathbb{P},\qquad \mathcal{A}_{2}= \mathcal{S}',\qquad \mathcal{B}=\{f(p): p\in \mathbb{P}\}.
 \]By Examples 1 and 2, the sequences $\mathcal{A}_{1}$ and $\mathcal{A}_{2}$ satisfy the hypotheses of Theorem \ref{T.f.P}.  Applying this theorem, we obtain \eqref{COROL.P} and
 \begin{equation}\label{BASIC.T.f.p.III.PRIME}
 \#\Big\{n \leq x: r'_{2}(n) \geq c_{1}\frac{x^{1/d}}{(\log x)^{3/2}}\Big\} \gg x,
 \end{equation}where
 \[
 r'_{2}(n)=\#\{ (s,p)\in \mathcal{S}'\times \mathbb{P}: s+ f(p)=n\}.
 \]Since $r'_{2}(n)\leq r_{2}(n)$, from \eqref{BASIC.T.f.p.III.PRIME} we obtain \eqref{COROL.S}. Corollary \ref{COROLLARY.f.P} is proved.
 \end{proof}

\section{Proof of Theorem \ref{T.a.f.p} and Corollary \ref{CORROLARY.a.f.p}}
\begin{lemma}\label{LEMMA.a.f.p}
Let $a\geq 2$ be an integer, and $f(n)=\gamma_{d}n^{d}+\ldots + \gamma_0$ be a polynomial with integer coefficients such that $\gamma_{d}>0$, $(\gamma_d,\ldots, \gamma_0)=1$, and $f: \mathbb{N} \to \mathbb{N}$. Set $\mathcal{B}=\{b_{p}: p\in \mathbb{P}\}$, where $b_{p}=a^{f(p)}$. Then for sufficiently large $x$ (in terms of $a$ and $f$) we have
 \begin{equation}\label{BASIC.a.f.p.ORDER.rho}
 \mathcal{B}(x)\asymp \frac{(\log x)^{1/d}}{\log\log x},\qquad 1\leq \rho_{\mathcal{B}}(x) \leq d,
 \end{equation}and
 \[
\sum_{\substack{j\in \mathbb{P}:\\ b_{j} \leq x}} \sum_{p\leq (\log x)^{1/(3d)}} \frac{\lambda_{\mathcal{B}} (x;j, p)\log p}{p}
\ll  \mathcal{B}(x)^{2},
\] where the constants implied by the symbols $\asymp$ and $\ll$ depend only on $a$ and $f$.
\end{lemma}
\begin{proof} Since the equation $f(x)=c$ has at most $d$ roots, we obtain $\textup{ord}_{\mathcal{B}}(v) \leq d$ for any positive integer $v$. Hence
\[
1\leq \rho_{\mathcal{B}}(x) \leq d,
\]if $x$ is sufficiently large in terms of $a$ and $f$ (it is enough for $x$ to be greater than $a^{f(2)}$). Applying \eqref{POLYNOMIAL.c.1.c.2}, we have
\begin{align*}
\mathcal{B}(x) &=\#\big\{p\in \mathbb{P}: a^{f(p)}\leq x\big\}=
\#\Big\{p\in \mathbb{P}: f(p)\leq \frac{\log x}{\log a}\Big\}\\
&\leq \#\Big\{p\in \mathbb{P}: c_{1} p^{d}\leq \frac{\log x}{\log a}\Big\}=
\#\Big\{p\in \mathbb{P}: p\leq \Big(\frac{\log x}{c_{1}\log a}\Big)^{1/d}\Big\}
\ll \frac{(\log x)^{1/d}}{\log\log x}.
\end{align*} Similarly,
\[
\mathcal{B}(x) \geq \#\Big\{p\in \mathbb{P}: c_{2} p^{d}\leq \frac{\log x}{\log a}\Big\}\gg \frac{(\log x)^{1/d}}{\log\log x}.
\] Thus, \eqref{BASIC.a.f.p.ORDER.rho} is proved.

Fix $j\in \mathbb{P}$ such that $b_j \leq x$ and $p\leq (\log x)^{1/ (3 d)}$. Since $\lambda_{\mathcal{B}} (x;j, p)\leq \mathcal{B}(x)$, we get
\begin{equation}\label{FIRST.EST.a.f.p}
\sum_{\substack{p\leq (\log x)^{1/(3d)}\\ p|a}} \frac{\lambda_{\mathcal{B}} (x;j, p)\log p}{p} \leq \mathcal{B}(x)
\sum_{p|a} \frac{\log p}{p}=c(a) \mathcal{B}(x).
\end{equation}

 Suppose that $(p,a)=1$. We put
  \[
  N= \Big[\Big(\frac{\log x}{c_{1}\log a}\Big)^{1/d}\Big].
  \]Let $\alpha_{1},\ldots, \alpha_{l}$ be all solutions of the congruence $f(x) \equiv f(j)$ (mod $h_{a}(p)$). By Lemma \ref{L1}, we have
 \begin{equation}\label{l.EST.a.f.p}
 l\leq c d (h_{a}(p))^{1-1/d}.
 \end{equation}  Applying \eqref{POLYNOMIAL.c.1.c.2}, we obtain
\begin{align}
\lambda_{\mathcal{B}} (x;j, p) &= \#\{ q\in \mathbb{P}: a^{f(q)}\leq x\text{ and $a^{f(q)}\equiv a^{f(j)}$ (mod $p$)}\}\notag\\
&\leq \#\{ q \leq [(\log x / (c_1 \log a))^{1/d}]: f(q)\equiv f(j)\text{ (mod $h_{a}(p)$)}\}\notag\\
&=\sum_{i=1}^{l} \#\{ q \leq N: q\equiv \alpha_{i}\text{ (mod $h_{a}(p)$)}\}=\sum_{i=1}^{l} \pi (N; h_{a}(p),\alpha_{i}).\label{LAMBDA.SUMMA.a.f.p}
\end{align}

 Fix $1\leq i \leq l$. We have
 \[
h_{a}(p)\leq p-1 < (\log x)^{1/(3d)}\leq N^{1/2}.
\] Suppose that $(\alpha_{i}, h_{a}(p))=1$. Applying Lemma \ref{BRUN.TITCHMARSCH} with $a=1/2$, we obtain
 \[
\pi(N;h_{a}(p),\alpha_{i})\leq C(a,f) \frac{(\log x)^{1/d}}{\varphi(h_{a}(p))\log\log x},
\]where $C(a,f)>0$ is a constant depending only on $a$ and $f$. If $(\alpha_{i}, h_{a}(p))> 1$, then
\begin{align*}
\pi(N;h_{a}(p),\alpha_{i})\leq 1&\leq C(a,f) \frac{(\log x)^{1/d}}{(\log x)^{1/(3d)}\log\log x}\\
&\leq C(a,f) \frac{(\log x)^{1/d}}{\varphi(h_{a}(p))\log\log x}.
\end{align*}Thus, in both cases $(\alpha_{i}, h_{a}(p))=1$ and $(\alpha_{i}, h_{a}(p))>1$ we have
\begin{equation}
\pi(N;h_{a}(p),\alpha_{i})\leq C(a,f) \frac{(\log x)^{1/d}}{\varphi(h_{a}(p))\log\log x}
\ll \frac{\mathcal{B}(x)}{\varphi (h_{a}(p))}.\label{LAMBDA.EST.a.f.p}
\end{equation}

From \eqref{l.EST.a.f.p}, \eqref{LAMBDA.SUMMA.a.f.p}, and \eqref{LAMBDA.EST.a.f.p} we obtain
\[
\lambda_{\mathcal{B}}(x;j, p) \ll \frac{\mathcal{B}(x)(h_{a}(p))^{1 - 1/d}}{\varphi (h_{a}(p))},
\]where the constant implied by the $\ll$-symbol depends on $a$ and $f$ only. Since
\[
\varphi(n) \geq c\,\frac{n}{\log\log (n+2)}
\]for any positive integer $n$, there exists a positive constant $c(d)$ depending only on $d$ such that
\[
\frac{n}{\varphi (n)} \leq c(d)n^{1/(2d)}
\]for any $n\in \mathbb{N}$. Therefore
\[
\lambda_{\mathcal{B}}(x;j, p) \ll \frac{\mathcal{B}(x)}{(h_{a}(p))^{1/(2d)}}.
\]

Using Lemma \ref{LEMMA.h.a.p}, we find
\begin{align}
\sum_{\substack{p\leq (\log x)^{1/(3d)}\\ (p,a)=1}} \frac{\lambda_{\mathcal{B}} (x;j, p)\log p}{p}
&\ll \mathcal{B}(x) \sum_{\substack{p\leq (\log x)^{1/(3d)}\\ (p,a)=1}} \frac{\log p}{p(h_{a}(p))^{1/(2d)}}\notag\\
&\leq \mathcal{B}(x) \sum_{\substack{p:\\ (p,a)=1}} \frac{\log p}{p(h_{a}(p))^{1/(2d)}}
\ll \mathcal{B}(x).\label{SECOND.EST.a.f.p}
\end{align} Gathering \eqref{FIRST.EST.a.f.p} and \eqref{SECOND.EST.a.f.p} together, we get
\[
\sum_{p\leq (\log x)^{1/(3d)}} \frac{\lambda_{\mathcal{B}} (x;j, p)\log p}{p}
\ll \mathcal{B}(x),
\] and hence
\[
\sum_{\substack{j\in \mathbb{P}:\\ b_j\leq x}} \sum_{p\leq (\log x)^{1/(3d)}} \frac{\lambda_{\mathcal{B}} (x;j, p)\log p}{p}
\ll \mathcal{B}(x) \sum_{\substack{j\in \mathbb{P}:\\ b_j \leq x}} 1= \mathcal{B}(x)^{2},
\] where the constant implied by the $\ll$-symbol depends only on $a$ and $f$. Lemma \ref{LEMMA.a.f.p} is proved.
\end{proof} Theorem \ref{T.a.f.p} follows from Theorem \ref{T.GENERAL} and Lemma \ref{LEMMA.a.f.p}.

\begin{proof}[Proof of Corollary \ref{CORROLARY.a.f.p}.] We set
 \[
\mathcal{A}= \mathcal{S}',\qquad \mathcal{B}=\Big\{a^{f(p)}: p\in \mathbb{P}\Big\}.
 \]By Example 2, the sequences $\mathcal{A}$ satisfies the hypotheses of Theorem \ref{T.a.f.p}. Applying this theorem, in the case $d \geq 2$ we have
 \[
 N'(x)\geq \frac{c_{1} x}{(\log x)^{1/2 -1/d}\log\log x},
 \]where
 \[
 N'(x)=\#\big\{n\leq x:\text{ there exist $s\in \mathcal{S}'$ and $p\in \mathbb{P}$ such that $s+ a^{f(p)}=n$}\big\}.
 \] Since $N'(x)\leq N(x)$, we obtain the first inequality in \eqref{N.a.f.p.c1.c2}. Also,
 \[
 N(x) \leq \mathcal{S}(x) \mathcal{B}(x)\ll \frac{x}{(\log x)^{1/2}} \frac{(\log x)^{1/d}}{\log\log x}
 =\frac{x}{(\log x)^{1/2 - 1/d}\log\log x},
 \]and \eqref{N.a.f.p.c1.c2} is proved.

 If $d=1$, then by Theorem \ref{T.a.f.p} we have
\begin{equation}\label{r.strich.a.f.p}
 \#\Big\{n \leq x: r'(n) \geq \frac{c_{1}(\log x)^{1/2}}{\log\log x}\Big\} \gg x,
 \end{equation}where
 \[
 r'(n)=\#\big\{ (s,p)\in \mathcal{S}'\times \mathbb{P}: s+ a^{f(p)}=n\big\}.
 \]Since $r'(n)\leq r(n)$, from \eqref{r.strich.a.f.p} we obtain \eqref{r.a.f.p}. Corollary \ref{CORROLARY.a.f.p} is proved.
 \end{proof}

\section{Proof of Theorem \ref{T.f.S} and Corollary \ref{COROLLARY.f.S}}

\begin{lemma}\label{LEMMA.IWANIEC}
Let $k$ be a positive integer, and let $l$ be an integer satisfying the conditions $(k,l)=1$ and $l\equiv 1$ \textup{(mod $(4,k)$)}. Then uniformly in $k$ one has
\begin{align*}
\#\{&n\leq x: n\equiv l\textup{ (mod $k$) and }n\in\mathcal{S}\} \\
&= \frac{(4,k)}{(2,k) k}\prod_{\substack{p|k\\ p \equiv 3\textup{\,(mod $4$)}}} (1+1/p)\frac{Bx}{\sqrt{\log x}}\Bigg(1+ O\bigg(\frac{\log (2k)}{\log x}\bigg)^{1/5}\Bigg),
\end{align*}where
\[
B=\frac{1}{\sqrt{2}}\prod_{p \equiv 3\textup{\,(mod $4$)}} \Big(1 - \frac{1}{p^{2}}\Big)^{-1/2}
\]and the implied constant is absolute.
\end{lemma}
\begin{proof}
This is Lemma 2.1 in \cite{Balog.Wooley} (which, in turn, follows from Iwaniec \cite[Corollary 1]{Iwaniec} and Rieger \cite[Satz 1]{Rieger}).
\end{proof}

\begin{lemma}\label{LEMMA.s.PROGRESSION}
Let $0< a< 1,$ $x \geq 2$, $1 \leq k \leq x^{a}$, and $l\in \mathbb{Z}$. Then there is a constant $C(a)>0$ depending only on $a$ such that
\[
\#\{n\leq x: n\equiv l\textup{ (mod $k$) and }n\in\mathcal{S}'\}\leq \frac{C(a)\sqrt{\log\log (k+2)}}{k} \frac{x}{\sqrt{\log x}}.
\]
\end{lemma}
\begin{proof}It suffices to assume that $x\geq x_{0}(a)$, where $x_{0}(a)>0$ is a large constant depending only on $a$. We have
\begin{align*}
\log \prod_{\substack{p|k\\ p \equiv 3\textup{\,(mod $4$)}}} (1+1/p)&\leq \sum_{\substack{p|k\\ p \equiv 3\textup{\,(mod $4$)}}} \frac{1}{p}\leq \sum_{\substack{p\leq \log(k+100)\\ p \equiv 3\textup{\,(mod $4$)}}} \frac{1}{p}+ \sum_{\substack{p|k\\ p> \log (k+100)}}\frac{1}{p}\\
&\leq \frac{1}{2}\log \log\log (k+100) + O(1) + \frac{\nu(k)}{\log (k+100)}\\
&\leq \frac{1}{2}\log \log\log (k+100) + c_0,
\end{align*}where $c_{0}>0$ is an absolute constant. We obtain
\[
\prod_{\substack{p|k\\ p \equiv 3\textup{\,(mod $4$)}}} (1+1/p)\leq c \sqrt{\log \log (k+2)}.
\]Therefore, if $(l,k)=1$ and $l\equiv 1$ (mod $(4,k)$), from
Lemma \ref{LEMMA.IWANIEC} we obtain
\begin{equation}\label{S.PROGRESS}
\#\{n\leq x: n\equiv l\textup{ (mod $k$) and }n\in\mathcal{S}'\}\leq \frac{C(a)\sqrt{\log\log (k+2)}}{k} \frac{x}{\sqrt{\log x}},
\end{equation} where $C(a)>0$ is a constant depending only on $a$.

Let us denote
\[
\Lambda = \{n\leq x: n\equiv l\textup{ (mod $k$) and }n\in\mathcal{S}'\}\qquad\text{and} \qquad T=\#\Lambda .
\] If $n\in \Lambda$, then $n\equiv 1$ (mod $4$) (since $n\in \langle P_{1}\rangle$). We see that if $l\not\equiv 1$ (mod $(4,k)$), then $T=0$ and \eqref{S.PROGRESS} follows. Hence, we can assume that $l\equiv 1$ (mod $(4,k)$). Suppose that $\delta = (l,k)>1$. If $n\in \Lambda$, then $\delta | n$. Therefore, if $\delta\notin \langle P_{1}\rangle$, then $T=0$ and \eqref{S.PROGRESS} follows. Hence, we can assume that $\delta \in \langle P_{1}\rangle$.

 Let us denote
\[
l_{1} = \frac{l}{\delta},\qquad k_{1}=\frac{k}{\delta},\qquad b= (4,k),\qquad \text{and}\qquad b_{1}= (4,k_{1}).
\]First of all,  we note that $(l_{1}, k_{1})=1$ and $b=b_{1}$, since $\delta \in \langle P_{1}\rangle$. Let us show that
\begin{equation}\label{l.1.b.1}
l_{1} \equiv 1\textup{ (mod $b_{1}$)}.
\end{equation} If $b=1$, then \eqref{l.1.b.1} holds. Suppose that $b=2$. If $l_{1}$ is even, then $l$ is even, but this contradicts the fact that $l\equiv 1$ (mod $b$). Hence, \eqref{l.1.b.1} holds in the case $b=2$. Finally, suppose that $b=4$. We note that $\delta \equiv 1$ (mod $4$), since $\delta \in \langle P_{1}\rangle$. If $l_{1} \equiv \alpha$ (mod $4$) for some $\alpha\in\{2, 3, 4\}$, then $l\equiv \alpha$ (mod $4$), but this contradicts the fact that $l\equiv 1$ (mod $b$). Thus, \eqref{l.1.b.1} is proved.

If $n\in \Lambda$, then $n=\delta n'$, where $n'\leq x/\delta$, $n'\in \mathcal{S}'$, and $n' \equiv l_{1}$ (mod $k_{1}$). Since $\delta\leq k \leq x^{a}$, we have
\[
\frac{x}{\delta} \geq x^{1-a}\geq 2,
\]if $x_{0}(a)$ is sufficiently large. Also,
\[
\delta^{a} k_{1} \leq \delta k_{1}= k \leq x^{a}
\]and hence $k_{1} \leq (x/\delta)^{a}$. Since $(l_{1}, k_{1})=1$ and $l_{1} \equiv 1$ (mod $(4,k_{1})$), from \eqref{S.PROGRESS} we obtain
\begin{align*}
T\leq \#\Big\{n'\leq \frac{x}{\delta}: n'\equiv l_{1}\textup{ (mod $k_{1}$) and }n'\in\mathcal{S}'\Big\}
&\leq \frac{C(a)\sqrt{\log\log (k_{1}+2)}}{k_{1}} \frac{x/\delta}{\sqrt{\log (x/\delta)}}\\
&\leq \frac{C'(a)\sqrt{\log\log (k+2)}}{k} \frac{x}{\sqrt{\log x}},
\end{align*}where $C' (a)>0$ is a constant depending only on $a$. Lemma \ref{LEMMA.s.PROGRESSION} is proved.
\end{proof}

\begin{lemma}\label{LEMMA.f.s}
  Let $f(n)=\gamma_{d} n^{d}+\ldots + \gamma_{0}$ be a polynomial with integer coefficients such that $\gamma_{d}>0$ and $f: \mathbb{N}\to \mathbb{N}$. Set $\mathcal{B}=\{f(s): s\in \mathcal{S}'\}$. Then for sufficiently large $x$ (in terms of $f$) we have
 \begin{equation}\label{B.x.POLYN.LEMMA.s}
 \mathcal{B}(x)\asymp \frac{x^{1/d}}{(\log x)^{1/2}},\qquad 1\leq \rho_{\mathcal{B}}(x) \leq d,
 \end{equation}and
 \[
\sum_{\substack{j\in \mathcal{S}':\\ f(j) \leq x}} \sum_{p\leq (\log x)^{1/2}} \frac{\lambda_{\mathcal{B}} (x;j, p)\log p}{p}
\ll  \mathcal{B}(x)^{2},
\] where the constants implied by the symbols $\asymp$ and $\ll$ depend only on the polynomial $f$.
 \end{lemma}
 \begin{proof}
 Since the equation $f(x)=c$ has at most $d$ roots, we obtain that $\textup{ord}_{\mathcal{B}}(v)\leq d$ for any positive integer $v$. Therefore
 \[
 1\leq \rho_{\mathcal{B}}(x) \leq d,
 \]if $x$ is sufficiently large in terms of $f$. Applying \eqref{POLYNOMIAL.c.1.c.2}, we have
 \begin{equation}\label{B.PRIMES.LOW.s}
 \mathcal{B}(x)=\#\{s\in \mathcal{S}': f(s)\leq x\}\leq \#\{s\in \mathcal{S}': c_{1}s^{d}\leq x\}\ll \frac{x^{1/d}}{(\log x)^{1/2}},
 \end{equation}and
 \[
 \mathcal{B}(x)\geq \#\{s\in \mathcal{S}': c_{2}s^{d}\leq x\}\gg \frac{x^{1/d}}{(\log x)^{1/2}},
 \]where the constants implied by the symbols $\ll$ and $\gg$ depend only on $f$. Thus, \eqref{B.x.POLYN.LEMMA.s} is proved.

 Fix $j\in \mathcal{S}'$ such that $f(j)\leq x$, and $p\leq (\log x)^{1/2}$. Suppose that $p> \gamma_{d}+3$, where $\gamma_{d}$ is the leading coefficient of the polynomial $f$. Let $\alpha_{1}, \ldots, \alpha_{l}$ be all solutions of the congruence $f(x) \equiv f(j)$ (mod $p$). By Lagrange's Theorem, $l\leq d$. By \eqref{B.PRIMES.LOW.s}, we have
 \begin{align*}
 \lambda_{\mathcal{B}}(x;j, p)&=\#\{s\in\mathcal{S}': f(s)\leq x\text{ and } f(s)\equiv f(j)\text{ (mod $p$)}\}\notag\\
 &= \sum_{i=1}^{l}\#\{s\in \mathcal{S}': f(s)\leq x\text{ and } s\equiv \alpha_{i}\text{ (mod $p$)}\}\notag\\
 &\leq \sum_{i=1}^{l}\#\Big\{s\leq \Big(\frac{x}{c_{1}}\Big)^{1/d}: s\equiv \alpha_{i}\text{ (mod $p$)}\text{ and } s\in\mathcal{S}'\Big\}.
 \end{align*}

 Fix $1 \leq i \leq l$. We have
 \[
 p\leq (\log x)^{1/2}< \Big(\frac{x}{c_{1}}\Big)^{1/(2d)}.
 \] Applying Lemma \ref{LEMMA.s.PROGRESSION} with $a=1/2$, we obtain
 \begin{align*}
 \#\Big\{s\leq \Big(\frac{x}{c_{1}}\Big)^{1/d}: s\equiv \alpha_{i}\text{ (mod $p$)}\text{ and } s\in\mathcal{S}'\Big\}
 &\leq C(f) \frac{x^{1/d}(\log\log p)^{1/2}}{p (\log x)^{1/2}}\\
 &\ll \mathcal{B}(x)\frac{(\log\log p)^{1/2}}{p},
 \end{align*}where $C(f)>0$ is a constant depending only on $f$. Hence,
 \[
 \lambda_{\mathcal{B}}(x;j, p) \ll \mathcal{B}(x)\frac{(\log\log p)^{1/2}}{p}.
 \]

 We obtain
 \begin{align}
 \sum_{\gamma_{d}+3< p \leq (\log x)^{1/2}} \frac{\lambda_{\mathcal{B}}(x;j, p)\log p}{p}
 &\ll \mathcal{B}(x)\sum_{\gamma_{d}+3< p \leq (\log x)^{1/2}} \frac{\log p(\log\log p)^{1/2}}{p^{2}}\notag\\
 &< \mathcal{B}(x)\sum_{p\geq 3}\frac{\log p(\log\log p)^{1/2}}{p^{2}}\ll \mathcal{B}(x).\label{LAMBDA.EST.POLY.I.s}
 \end{align}Since
 \[
 \lambda_{\mathcal{B}}(x;j,p)\leq \mathcal{B}(x),
 \]we also have
 \begin{equation}\label{LAMBDA.EST.POLY.II.s}
 \sum_{p\leq \gamma_{d}+3} \frac{\lambda_{\mathcal{B}}(x;j, p)\log p}{p}
 \leq \mathcal{B}(x)\sum_{p\leq \gamma_{d}+3} \frac{\log p}{p}\ll \mathcal{B}(x),
 \end{equation}where the constant implied by the $\ll$-symbol depends only on $f$. Gathering \eqref{LAMBDA.EST.POLY.I.s} and \eqref{LAMBDA.EST.POLY.II.s} together, we obtain
 \[
 \sum_{p \leq (\log x)^{1/2}} \frac{\lambda_{\mathcal{B}}(x;j, p)\log p}{p}
 \ll \mathcal{B}(x),
 \]and hence
 \[
  \sum_{\substack{j\in \mathcal{S}':\\ f(j)\leq x}}\sum_{p \leq (\log x)^{1/2}} \frac{\lambda_{\mathcal{B}}(x;j, p)\log p}{p} \ll \mathcal{B}(x)\sum_{\substack{j\in \mathcal{S}':\\ f(j)\leq x}} 1=
  \mathcal{B}(x)^{2}.
 \]Lemma \ref{LEMMA.f.s} is proved.
 \end{proof}We complete the proof of Theorem \ref{T.f.S}. Let
 \[
 r'(n)=\#\{(k,s)\in \mathbb{N}\times \mathcal{S}': a_{k}+ f(s)=n\}.
 \]It follows from Theorem \ref{T.GENERAL} and Lemma \ref{LEMMA.f.s} that there are positive constants $x_{0}$, $c_{1}$, and $c_{2}$ depending only on $f$ and the constants implied by the symbols $\ll$, $\gg$, $\asymp$ in \eqref{T1:Basic.1} and \eqref{BASIC.A.r} such that
 \begin{equation}\label{r.EST.subset}
 \#\Big\{n\leq x: r'(n)\geq c_{1}\frac{x^{1/d}}{\eta(x)(\log x)^{1/2}}\Big\}\geq c_{2}x \frac{x^{1/d}}{x^{1/d}+\eta(x)(\log x)^{1/2}}
 \end{equation}for any $x\geq x_{0}$. Since $r'(n)\leq r(n)$, from \eqref{r.EST.subset} we obtain \eqref{r.EST.s}. Theorem \ref{T.f.S} is proved.

\begin{proof}[Proof of Corollary \ref{COROLLARY.f.S}.] We set
 \[
 \mathcal{A}_{1}=\mathbb{P},\qquad \mathcal{A}_{2}= \mathcal{S}',\qquad \mathcal{B}=\{f(s): s\in \mathcal{S}\}.
 \]By Examples 1 and 2, the sequences $\mathcal{A}_{1}$ and $\mathcal{A}_{2}$ satisfy the hypotheses of Theorem \ref{T.f.S}. It follows from this theorem that there exist positive constants $c_{1}$, $c_{2}$, and $x_{0}$ depending only on $f$ such that \eqref{basic.s.1} holds and
 \begin{equation}\label{BASIC.T.f.s.III}
 \#\Big\{n \leq x: r'_{2}(n) \geq  c_{1}\frac{ x^{1/d}}{\log x}\Big\}\geq c_{2} x
 \end{equation}for any $x\geq x_{0}$, where
 \[
 r'_{2}(n)=\#\{ (s,l)\in \mathcal{S}'\times \mathcal{S}: s+ f(l)=n\}.
 \]Since $r'_{2}(n)\leq r_{2}(n)$, from \eqref{BASIC.T.f.s.III} we obtain \eqref{basic.s.2}. Corollary \ref{COROLLARY.f.S} is proved.
 \end{proof}

\section{Proof of Theorem \ref{T.a.f.s} and Corollary \ref{CORROLARY.a.f.s}}
\begin{lemma}\label{LEMMA.a.f.s}
Let $a\geq 2$ be an integer, and $f(n)=\gamma_{d}n^{d}+\ldots + \gamma_0$ be a polynomial with integer coefficients such that $\gamma_{d}>0$, $(\gamma_d,\ldots, \gamma_0)=1$, and $f: \mathbb{N} \to \mathbb{N}$. Set $\mathcal{B}=\{b_{s}: s\in \mathcal{S}'\}$, where $b_{s}=a^{f(s)}$. Then for sufficiently large $x$ (in terms of $a$ and $f$) we have
 \begin{equation}\label{BASIC.a.f.p.ORDER.rho.s}
 \mathcal{B}(x)\asymp \frac{(\log x)^{1/d}}{(\log\log x)^{1/2}},\qquad 1\leq \rho_{\mathcal{B}}(x) \leq d,
 \end{equation}and
 \[
\sum_{\substack{j\in \mathcal{S}':\\ b_{j} \leq x}} \sum_{p\leq (\log x)^{1/(3d)}} \frac{\lambda_{\mathcal{B}} (x;j, p)\log p}{p}
\ll  \mathcal{B}(x)^{2},
\] where the constants implied by the symbols $\asymp$ and $\ll$ depend only on $a$ and $f$.
\end{lemma}
\begin{proof} Since the equation $f(x)=c$ has at most $d$ roots, we obtain $\textup{ord}_{\mathcal{B}}(v) \leq d$ for any positive integer $v$. Hence
\[
1\leq \rho_{\mathcal{B}}(x) \leq d,
\]if $x$ is sufficiently large in terms of $a$ and $f$. Applying \eqref{POLYNOMIAL.c.1.c.2}, we have
\begin{align*}
\mathcal{B}(x) &=\#\big\{s\in \mathcal{S}': a^{f(s)}\leq x\big\}=
\#\Big\{s\in \mathcal{S}': f(s)\leq \frac{\log x}{\log a}\Big\}\\
&\leq \#\Big\{s\in \mathcal{S}': c_{1} s^{d}\leq \frac{\log x}{\log a}\Big\}=
\#\Big\{s\in \mathcal{S}': s\leq \Big(\frac{\log x}{c_{1}\log a}\Big)^{1/d}\Big\}
\ll \frac{(\log x)^{1/d}}{(\log\log x)^{1/2}}.
\end{align*} Similarly,
\[
\mathcal{B}(x) \geq \#\Big\{s\in \mathcal{S}': c_{2} s^{d}\leq \frac{\log x}{\log a}\Big\}\gg \frac{(\log x)^{1/d}}{(\log\log x)^{1/2}}.
\] Thus, \eqref{BASIC.a.f.p.ORDER.rho.s} is proved.

Fix $j\in \mathcal{S}'$ such that $b_j \leq x$ and $p\leq (\log x)^{1/ (3 d)}$. Since $\lambda_{\mathcal{B}} (x;j, p)\leq \mathcal{B}(x)$, we get
\begin{equation}\label{FIRST.EST.a.f.p.s}
\sum_{\substack{p\leq (\log x)^{1/(3d)}\\ p|a}} \frac{\lambda_{\mathcal{B}} (x;j, p)\log p}{p} \leq \mathcal{B}(x)
\sum_{p|a} \frac{\log p}{p}=c(a) \mathcal{B}(x).
\end{equation}

 Suppose that $(p,a)=1$. We put
  \[
  N= \Big[\Big(\frac{\log x}{c_{1}\log a}\Big)^{1/d}\Big].
  \]Let $\alpha_{1},\ldots, \alpha_{l}$ be all solutions of the congruence $f(x) \equiv f(j)$ (mod $h_{a}(p)$). By Lemma \ref{L1}, we have
 \begin{equation}\label{l.EST.a.f.p.s}
 l\leq c d (h_{a}(p))^{1-1/d}.
 \end{equation}  Applying \eqref{POLYNOMIAL.c.1.c.2}, we obtain
\begin{align}
\lambda_{\mathcal{B}} (x;j, p) &= \#\{ s\in \mathcal{S}': a^{f(s)}\leq x\text{ and $a^{f(s)}\equiv a^{f(j)}$ (mod $p$)}\}\notag\\
&\leq \#\{ s \leq [(\log x / (c_1 \log a))^{1/d}]: f(s)\equiv f(j)\textup{ (mod $h_{a}(p)$) and }s\in\mathcal{S}'\}\notag\\
&=\sum_{i=1}^{l} \#\{ s \leq N: s\equiv \alpha_{i}\textup{ (mod $h_{a}(p)$) and }s\in\mathcal{S}'\}.\label{LAMBDA.SUMMA.a.f.p.s}
\end{align}

 Fix $1\leq i \leq l$. We have
 \[
h_{a}(p)\leq p-1 < (\log x)^{1/(3d)}\leq N^{1/2}.
\]  Applying Lemma \ref{LEMMA.s.PROGRESSION} with $a=1/2$, we obtain
 \[
\#\{ s \leq N: s\equiv \alpha_{i}\textup{ (mod $h_{a}(p)$) and }s\in\mathcal{S}'\}\leq C(a,f) \frac{(\log x)^{1/d}\log\log (h_{a}(p)+2)}{h_{a}(p)(\log\log x)^{1/2}}.
\]where $C(a,f)$ is a positive constant depending only on $a$ and $f$. There is a constant $c(d)>0$ depending only on $d$ such that
\[
\log\log (n+2) \leq c(d) n^{1/(2d)}
\]for any $n\in \mathbb{N}$. Using also \eqref{l.EST.a.f.p.s}, \eqref{LAMBDA.SUMMA.a.f.p.s}, and \eqref{BASIC.a.f.p.ORDER.rho.s}, we obtain
\[
\lambda_{\mathcal{B}} (x;j, p)\ll \frac{\mathcal{B}(x)}{(h_{a}(p))^{1/(2d)}},
\]where the constant implied by the $\ll$-symbol depends on $a$ and $f$ only.

Using Lemma \ref{LEMMA.h.a.p}, we find
\begin{align}
\sum_{\substack{p\leq (\log x)^{1/(3d)}\\ (p,a)=1}} \frac{\lambda_{\mathcal{B}} (x;j, p)\log p}{p}
&\ll \mathcal{B}(x) \sum_{\substack{p\leq (\log x)^{1/(3d)}\\ (p,a)=1}} \frac{\log p}{p(h_{a}(p))^{1/(2d)}}\notag\\
&\leq \mathcal{B}(x) \sum_{\substack{p:\\ (p,a)=1}} \frac{\log p}{p(h_{a}(p))^{1/(2d)}}
\ll \mathcal{B}(x).\label{SECOND.EST.a.f.p.s}
\end{align} Gathering \eqref{FIRST.EST.a.f.p.s} and \eqref{SECOND.EST.a.f.p.s} together, we get
\[
\sum_{p\leq (\log x)^{1/(3d)}} \frac{\lambda_{\mathcal{B}} (x;j, p)\log p}{p}
\ll \mathcal{B}(x),
\] and hence
\[
\sum_{\substack{j\in \mathcal{S}':\\ b_j\leq x}} \sum_{p\leq (\log x)^{1/(3d)}} \frac{\lambda_{\mathcal{B}} (x;j, p)\log p}{p}
\ll \mathcal{B}(x) \sum_{\substack{j\in \mathcal{S}':\\ b_j \leq x}} 1= \mathcal{B}(x)^{2},
\] where the constant implied by the $\ll$-symbol depends only on $a$ and $f$. Lemma \ref{LEMMA.a.f.s} is proved.
\end{proof}

We complete the proof of Theorem \ref{T.a.f.s}. Let
 \[
 r'(n)=\#\Big\{(k,s)\in \mathbb{N}\times \mathcal{S}': a_{k}+ a^{f(s)}=n\Big\}.
 \]It follows from Theorem \ref{T.GENERAL} and Lemma \ref{LEMMA.a.f.s} that there are positive constants $x_{0}$, $c_{1}$, and $c_{2}$ depending only on $a$, $f$, and the constants implied by the symbols $\ll$, $\gg$, $\asymp$ in \eqref{T1:Basic.1} and \eqref{BASIC.A.r} such that
 \begin{equation}\label{r.EST.subset.s}
 \#\Big\{n\leq x: r'(n)\geq c_{1}\frac{(\log x)^{1/d}}{\eta(x)(\log\log x)^{1/2}}\Big\}\geq c_{2}x \frac{(\log x)^{1/d}}{(\log x)^{1/d}+\eta(x)(\log\log x)^{1/2}}
 \end{equation}for any $x\geq x_{0}$. Since $r'(n)\leq r(n)$, from \eqref{r.EST.subset.s} we obtain \eqref{a.f.s.STATE}. Theorem \ref{T.a.f.s} is proved.

\begin{proof}[Proof of Corollary \ref{CORROLARY.a.f.s}.] We set
 \[
\mathcal{A}= \mathcal{S}',\qquad \mathcal{B}=\Big\{a^{f(s)}: s\in \mathcal{S}\Big\}.
 \]By Example 2, the sequences $\mathcal{A}$ satisfies the hypotheses of Theorem \ref{T.a.f.s}. Applying this theorem, in the case $d \geq 2$ we have
 \[
 N'(x)\geq \frac{c_{1} x}{(\log x)^{1/2 -1/d}(\log\log x)^{1/2}},
 \]where
 \[
 N'(x)=\#\big\{n\leq x:\text{ there exist $s\in \mathcal{S}'$ and $l\in \mathcal{S}$ such that $s+ a^{f(l)}=n$}\big\}.
 \] Since $N'(x)\leq N(x)$, we obtain the first inequality in \eqref{N.a.f.s.c1.c2}. Also,
 \[
 N(x) \leq \mathcal{S}(x) \mathcal{B}(x)\ll \frac{x}{(\log x)^{1/2}} \frac{(\log x)^{1/d}}{(\log\log x)^{1/2}}
 =\frac{x}{(\log x)^{1/2 - 1/d}(\log\log x)^{1/2}},
 \]verifying \eqref{N.a.f.s.c1.c2}.

 If $d=1$, then by Theorem \ref{T.a.f.s} we have
\begin{equation}\label{r.strich.a.f.s}
 \#\Big\{n \leq x: r'(n) \geq c_{1}\Big(\frac{\log x}{\log\log x}\Big)^{1/2}\Big\} \gg x,
 \end{equation}where
 \[
 r'(n)=\#\big\{ (s,l)\in \mathcal{S}'\times \mathcal{S}: s+ a^{f(l)}=n\big\}.
 \]Since $r'(n)\leq r(n)$, from \eqref{r.strich.a.f.s} we obtain \eqref{r.a.f.s}. Corollary \ref{CORROLARY.a.f.s} is proved.
 \end{proof}

\section{Proof of Theorem \ref{T.ELLIPTIC.POLYNOMIAL} and Corollary \ref{COROLLARY.ELLIPTIC.POLYNOMIAL}}

\begin{lemma}\label{LEMMA.DAVID.WU}
Let $E$ be an elliptic curve given by the equation $y^{2}= x^{3} +Ax + B$, where $A$ and $B$ are integers satisfying $\Delta = 4A^{3} + 27 B^{2}\neq 0$. Suppose that $E$ does not have complex multiplication. Then there exist constants $C(E)>0$ and $x_{0}(E)>0$ depending only on the elliptic curve $E$ such that for any $x\geq x_{0}(E)$, $1 \leq n \leq (\log x)^{1/14}$, and any $\alpha\in \mathbb{Z}$ we have
\[
\#\{3\leq q\leq x: \#E(\mathbb{F}_{q})\equiv \alpha\ \text{\textup{(mod} $n$\textup{)}}\}\leq
C(E) \biggl(\frac{x}{\varphi(n)\log x}+ x\, \textup{exp} (-b n^{-2} \sqrt{\log x})\biggr).
\]Here $b>0$ is an absolute constant.
\end{lemma}
\begin{proof}
This Lemma follows from part \textup{(i)} of Theorem 2.3 in \cite{David.Wu}.
\end{proof}

\begin{lemma}\label{LEMMA.ELLIPTIC}
Let $E$ be an elliptic curve given by the equation $y^{2}= x^{3} +Ax + B$, where $A$ and $B$ are integers satisfying $\Delta = 4A^{3} + 27 B^{2}\neq 0$. Suppose that $E$ does not have complex multiplication. Let $f(n)=\gamma_{d} n^{d}+\ldots + \gamma_{0}$ be a polynomial with integer coefficients such that $\gamma_{d}>0$ and $f: \mathbb{N}\to \mathbb{N}$. Set $\mathcal{B}=\{f(\#E(\mathbb{F}_{p})): p\geq 3\}$. Then for sufficiently large $x$ (in terms of $E$ and $f$) we have
 \begin{equation}\label{ELLIPT.ORD}
 \mathcal{B}(x)\asymp \frac{x^{1/d}}{\log x},\qquad 1\leq \rho_{\mathcal{B}}(x) \ll x^{1/(2d)},
 \end{equation}and
 \[
\sum_{\substack{j\in \mathbb{P}:\,j\geq 3\\ f(\#E(\mathbb{F}_{j})) \leq x}} \sum_{p\leq (\log x)^{1/15}} \frac{\lambda_{\mathcal{B}} (x;j, p)\log p}{p}
\ll  \mathcal{B}(x)^{2},
\] where the constants implied by the symbols $\asymp$ and $\ll$ depend only on the elliptic curve $E$ and the polynomial $f$.
\end{lemma}
\begin{proof}

By Hasse's theorem (see, for example, \cite[Theorem 477]{Hardy.Wright}), we have $|\#E(\mathbb{F}_{p})- (p+1)|< 2\sqrt{p}$, and hence
\begin{equation}\label{HASSE}
(\sqrt{p}-1)^{2}< \#E(\mathbb{F}_{p})< (\sqrt{p}+1)^{2}.
\end{equation}

Suppose that $q\geq 5$. Let $p$ be a prime with $p> q + 4\sqrt{q}+ 4$. Then $(\sqrt{p}-1)^{2}> (\sqrt{q}+1)^{2}$, and by \eqref{HASSE} we have $\#E(\mathbb{F}_{p})> \#E(\mathbb{F}_{q})$. Let $p$ be a prime with $p< q - 4\sqrt{q}+ 4$. Then $(\sqrt{p}+1)^{2}< (\sqrt{q}-1)^{2}$, and it follows from \eqref{HASSE} that $\#E(\mathbb{F}_{p})< \#E(\mathbb{F}_{q})$. Hence,
\begin{align*}
\#\{p \geq 3: \#E(\mathbb{F}_{p})=\#E(\mathbb{F}_{q})\}&\leq \#\{p: q - 4\sqrt{q}+ 4 \leq p \leq q + 4\sqrt{q}+ 4 \}\\
&\leq \#\{n\in \mathbb{N}: q - 4\sqrt{q}+ 4 \leq n \leq q + 4\sqrt{q}+ 4 \}\\
&\leq 8\sqrt{q}+1 < 9\sqrt{q}.
\end{align*}Applying \eqref{HASSE} again, we see that $\#E(\mathbb{F}_{p})>\#E(\mathbb{F}_{3})$ for any $p>13$. Therefore
\[
\#\{p \geq 3: \#E(\mathbb{F}_{p})=\#E(\mathbb{F}_{3})\}\leq \#\{p\leq 13\}=6.
\]We obtain
\begin{equation}\label{BASIC.ORDER}
\#\{p \geq 3: \#E(\mathbb{F}_{p})=\#E(\mathbb{F}_{q})\} \leq 9\sqrt{q}
\end{equation}for any $q\geq 3$.

 Let $v\in \mathbb{N}$ and $\alpha_{1},\ldots,\alpha_{l}$ be all positive integer roots of the equation $f(x)=v$. We note that $l\leq d$. We have
\begin{equation}\label{ORDER}
\textup{ord}_{\mathcal{B}} (v)=\#\{p\geq 3: f(\#E(\mathbb{F}_{p}))=v\} \leq \sum_{i=1}^{l}
\#\{p\geq 3: \#E(\mathbb{F}_{p})= \alpha_{i}\}.
\end{equation}

Fix $1\leq i \leq l$. Applying \eqref{POLYNOMIAL.c.1.c.2}, we have
\[
v= f(\alpha_{i})\geq c_{1} \alpha_{i}^{d},\qquad\text{and hence}\qquad \alpha_{i} \leq \Big(\frac{v}{c_{1}}\Big)^{1/d}.
\]If $\alpha_{i}\neq \#E(\mathbb{F}_{q})$ for any $q\geq 3$, then
\[
\#\{p\geq 3: \#E(\mathbb{F}_{p})= \alpha_{i}\} = 0.
\] Suppose that $\alpha_{i} = \#E(\mathbb{F}_{q})$ for some $q\geq 3$. Then we have
\[
\Big(\frac{v}{c_{1}}\Big)^{1/d} \geq \alpha_{i} = \#E(\mathbb{F}_{q}) > (\sqrt{q}-1)^{2}> \frac{q}{9}.
\]We get
\begin{equation}\label{BASIC.q}
q \ll v^{1/d}.
\end{equation} From \eqref{BASIC.ORDER} and \eqref{BASIC.q} we obtain
\[
\#\{p\geq 3: \#E(\mathbb{F}_{p})= \alpha_{i}\}\ll v^{1/(2d)}.
\]Since $l\leq d$, from \eqref{ORDER} we find that
\[
\textup{ord}_{\mathcal{B}}(v)\ll v^{1/(2d)}
\]for any positive integer $v$. Therefore for sufficiently large $x$ in terms of $f$ and $E$ we have
\[
1 \leq \rho_{\mathcal{B}}(x) = \max_{1\leq v \leq x}\textup{ord}_{\mathcal{B}}(v) \ll x^{1/(2d)}.
\]

Applying \eqref{POLYNOMIAL.c.1.c.2} and \eqref{HASSE}, we obtain
\begin{align}
\mathcal{B}(x)&=\#\{p\geq 3: f(\#E(\mathbb{F}_{p}))\leq x\}\leq \#\{p\geq 3: c_{1}(\#E(\mathbb{F}_{p}))^{d}\leq x\}\notag\\
&= \#\Big\{p\geq 3: \#E(\mathbb{F}_{p})\leq \Big(\frac{x}{c_{1}}\Big)^{1/d}\Big\}\leq
\#\Big\{p\geq 3: (\sqrt{p}-1)^{2}\leq \Big(\frac{x}{c_{1}}\Big)^{1/d}\Big\}\notag\\
&\leq \#\Big\{p\geq 3: p\leq 9\Big(\frac{x}{c_{1}}\Big)^{1/d}\Big\}\ll \frac{x^{1/d}}{\log x}.\label{B.LOW}
\end{align}
Similarly,
\begin{align*}
\mathcal{B}(x)&\geq \#\{p\geq 3: c_{2}(\#E(\mathbb{F}_{p}))^{d}\leq x\}= \#\Big\{p\geq 3: \#E(\mathbb{F}_{p})\leq \Big(\frac{x}{c_{2}}\Big)^{1/d}\Big\}\\
&\geq \#\Big\{p\geq 3: (\sqrt{p}+1)^{2}\leq \Big(\frac{x}{c_{2}}\Big)^{1/d}\Big\}\\
&\geq \#\Big\{p\geq 3: p\leq \frac{1}{4}\Big(\frac{x}{c_{2}}\Big)^{1/d}\Big\}\gg \frac{x^{1/d}}{\log x}.
\end{align*}Thus, the statement \eqref{ELLIPT.ORD} is proved.

Fix $j\in \mathbb{P}$ such that $j\geq 3$ and $f(\#E (\mathbb{F}_{j}))\leq x$ and $p\leq (\log x)^{1/15}$. Suppose that $p>  \gamma_{d}$, where $\gamma_{d}$ is the leading coefficient of the polynomial $f$. Let $\alpha_{1},\ldots, \alpha_{l}$ be all solutions of the congruence
\[
f(x)\equiv f(\#E(\mathbb{F}_{j}))\text{ (mod $p$)}.
\]By Lagrange's theorem, $l\leq d$. Set
\[
N= \Big[9\Big(\frac{x}{c_{1}}\Big)^{1/d}\Big].
\]

 We see from \eqref{B.LOW} that
\begin{align*}
\lambda_{\mathcal{B}}(x;j, p)&=\#\{q\geq 3:  f(\#E(\mathbb{F}_{q}))\leq x\text{ and } f(\#E(\mathbb{F}_{q})) \equiv f(\#E(\mathbb{F}_{j}))\text{ (mod $p$)}\}\\
&=\sum_{i=1}^{l}\#\{q\geq 3:  f(\#E(\mathbb{F}_{q}))\leq x\text{ and } \#E(\mathbb{F}_{q}) \equiv \alpha_{i}\text{ (mod $p$)}\}\\
&\leq\sum_{i=1}^{l}\#\Big\{ 3\leq q\leq N:
  \#E(\mathbb{F}_{q}) \equiv \alpha_{i}\text{ (mod $p$)}\Big\}.
\end{align*} For sufficiently large $x$, we have
\[
N\geq x_{0}(E),
\]where $x_{0} (E)$ is the constant from Lemma \ref{LEMMA.DAVID.WU}, and
\[
p\leq (\log x)^{1/15}\leq (\log N)^{1/14}.
\] Using Lemma \ref{LEMMA.DAVID.WU}, we obtain
\[
\lambda_{\mathcal{B}}(x;j, p)\ll \frac{x^{1/d}}{p\log x}+ x^{1/d}\, \textup{exp} \big(-b(f) p^{-2} \sqrt{\log x}\big),
\] where $b(f)>0$ is a constant depending only on $f$, and the constant implied by the symbol $\ll$ depends only on $E$ and $f$.

 We have
\begin{align*}
\sum_{\gamma_{d}< p\leq (\log x)^{1/15}}\frac{\lambda_{\mathcal{B}}(x;j, p)\log p}{p}\ll&
\frac{x^{1/d}}{\log x} \sum_{\gamma_{d}< p\leq (\log x)^{1/15}}\frac{\log p}{p^{2}}\\
&+x^{1/d}\sum_{\gamma_{d}< p\leq (\log x)^{1/15}}\textup{exp} \big(-b(f) p^{-2} \sqrt{\log x}\big).
\end{align*} For any $\gamma_{d}< p\leq (\log x)^{1/15}$, we have
\[
\textup{exp} \big(-b(f) p^{-2} \sqrt{\log x}\big)\leq \textup{exp} \big(-b(f) (\log x)^{11/30}\big).
\]We obtain that the last sum is
\[
\leq x^{1/d}(\log x)^{1/15}\textup{exp} \big(-b(f) (\log x)^{11/30}\big) \leq \frac{x^{1/d}}{\log x}.
\]Since
\[
\sum_{\gamma_{d}< p\leq (\log x)^{1/15}}\frac{\log p}{p^{2}}<
\sum_{p}\frac{\log p}{p^{2}}\ll 1,
\]we get
\begin{equation}\label{STEP.1}
\sum_{\gamma_{d}< p\leq (\log x)^{1/15}}\frac{\lambda_{\mathcal{B}}(x;j, p)\log p}{p}
\ll \frac{x^{1/d}}{\log x} \ll \mathcal{B}(x).
\end{equation} Since
\[
\lambda_{\mathcal{B}}(x;j, p) \leq \mathcal{B}(x),
\]we obtain
\begin{equation}\label{STEP.2}
\sum_{p \leq \gamma_{d}}\frac{\lambda_{\mathcal{B}}(x;j, p)\log p}{p}\leq
\mathcal{B}(x) \sum_{ p \leq \gamma_{d}} \frac{\log p}{p}\ll \mathcal{B}(x).
\end{equation}Gathering \eqref{STEP.1} and \eqref{STEP.2} together, we obtain
\[
\sum_{p \leq (\log x)^{1/15}}\frac{\lambda_{\mathcal{B}}(x;j, p)\log p}{p} \ll \mathcal{B}(x),
\]where the constant implied by the symbol $\ll$ depends only on $E$ and $f$. Therefore
\[
\sum_{\substack{j\in \mathbb{P}:\,j\geq 3\\ f(\#E(\mathbb{F}_{j})) \leq x}} \sum_{p\leq (\log x)^{1/15}} \frac{\lambda_{\mathcal{B}} (x;j, p)\log p}{p}
\ll  \mathcal{B}(x) \sum_{\substack{j\in \mathbb{P}:\,j\geq 3\\ f(\#E(\mathbb{F}_{j})) \leq x}}1 =
\mathcal{B}(x)^{2}.
\]Lemma \ref{LEMMA.ELLIPTIC} is proved.
\end{proof}

We see from Lemma \ref{LEMMA.ELLIPTIC} that the sequence $\mathcal{B}=\{f(\#E(\mathbb{F}_{p})): p\geq 3\}$ satisfies \eqref{T1:Basic.2} and \eqref{T1:Basic.3} with $\alpha = 1/15$. Application of Theorem \ref{T.GENERAL} completes the proof of Theorem \ref{T.ELLIPTIC.POLYNOMIAL}.

\begin{proof}[Proof of Corollary \ref{COROLLARY.ELLIPTIC.POLYNOMIAL}.] We set
 \[
 \mathcal{A}_{1}=\mathbb{P},\qquad \mathcal{A}_{2}= \mathcal{S}',\qquad \mathcal{B}=\{f(\#E(\mathbb{F}_{p})): p\geq 3\}.
 \]By Examples 1 and 2, the sequences $\mathcal{A}_{1}$ and $\mathcal{A}_{2}$ satisfy the hypotheses of Theorem \ref{T.ELLIPTIC.POLYNOMIAL}.  Applying this theorem, we obtain \eqref{REFER.ELL.1} and
 \begin{equation}\label{BASIC.T.f.p.III}
 \#\Big\{n \leq x: r'_{2}(n) \geq c_{1}\frac{x^{1/d}}{(\log x)^{3/2}}\Big\} \gg x,
 \end{equation}where
 \[
 r'_{2}(n)=\#\{ (s,p)\in \mathcal{S}'\times \mathbb{P}: p\geq 3\text{ and } s+ f(\#E(\mathbb{F}_{p}))=n\}.
 \]Since $r'_{2}(n)\leq r_{2}(n)$, from \eqref{BASIC.T.f.p.III} we obtain \eqref{REFER.ELL.2}. Corollary \ref{COROLLARY.ELLIPTIC.POLYNOMIAL} is proved.
 \end{proof}

 \section{Proof of Theorem \ref{T.EULER.ELLIPTIC}}

 We assume that $x\geq x_{0}$, where $x_{0}= x_{0}(E, f)$ is a large constant depending only on $E$ and $f$. By \eqref{POLYNOMIAL.c.1.c.2} and \eqref{HASSE}, for any prime $3 \leq q\leq x$ we have
\begin{equation}\label{M.ESTIMATE}
f(\#E(\mathbb{F}_{q}))\leq c_{2} (\#E(\mathbb{F}_{q}))^{d}\leq c_{2}(\sqrt{x}+1)^{2d}\leq c x^{d},
\end{equation}where $c >0$ is a constant depending only on $f$.

 We put $M=cx^{d}$. We have
 \[
 (\log M)^{1/15} = (d\log x+ \log c)^{1/15}< (\log x)^{1/14}.
 \] Applying Lemma \ref{Lemma.Euler} with $\alpha =1/15$, we obtain
\begin{equation}\label{EULER.1}
\sum_{3 \leq q \leq x} \bigg(\frac{f(\#E(\mathbb{F}_{q}))}{\varphi(f(\#E(\mathbb{F}_{q})))}\bigg)^{s}
\leq C_{0}^{s}\bigg(\pi (x)+ \sum_{p\leq (\log x)^{1/14}}\frac{\omega (p)(\log p)^{s}}{p}\bigg),
\end{equation}where $C_{0}>0$ is an absolute constant and
\[
\omega (p) = \#\big\{3\leq q\leq x: f(\#E(\mathbb{F}_{q}))\equiv 0\textup{ (mod $p$)}\big\}.
\]

Fix $\gamma_{d}< p \leq (\log x)^{1/14}$, where $\gamma_{d}$ is the leading coefficient of the polynomial $f$. Let $\alpha_{1},\ldots, \alpha_{l}$ be all solutions of the congruence $f(x)\equiv 0$ (mod $p$). By Lagrange's theorem, $l\leq d$. Using Lemma \ref{LEMMA.DAVID.WU}, we obtain
\[
\omega (p) =\sum_{i=1}^{l} \#\big\{3\leq q\leq x: \#E(\mathbb{F}_{q})\equiv \alpha_{i}\textup{ (mod $p$)}\big\}\ll \frac{x}{p\log x}+ x\, \textup{exp} (-b p^{-2} \sqrt{\log x}),
\]where the constant implied by the $\ll$-symbol depends only on $E$ and $f$. Therefore
\begin{align}
\sum_{ \gamma_{d}< p\leq (\log x)^{1/14}}\frac{\omega (p)(\log p)^{s}}{p}
\ll\  &\pi(x)\sum_{\gamma_{d}< p\leq (\log x)^{1/14}} \frac{(\log p)^{s}}{p^{2}}\notag\\
&+ x \sum_{\gamma_{d}< p\leq (\log x)^{1/14}}\textup{exp} (-b p^{-2} \sqrt{\log x})\label{EULER.2}
\frac{(\log p)^{s}}{p}.
\end{align}

Applying summation by parts, for any positive integer $N> (\log x)^{1/14}$ we have
\begin{align}
&\sum_{\gamma_{d}< p\leq (\log x)^{1/14}}\frac{(\log p)^{s}}{p^{2}}\leq \sum_{1\leq n \leq N} \frac{(\log n)^{s}}{n^{2}}=\int_{1}^{N}\frac{(\log t)^{s}}{t^{2}}\,dt +
\int_{1}^{N}\{t\} \frac{s(\log t)^{s-1}}{t^{3}}\, dt\notag\\
 &- \int_{1}^{N}\{t\}\frac{2(\log t)^{s}}{t^{3}}\, dt
 \leq \int_{1}^{\infty}\frac{(\log t)^{s}}{t^{2}}\,dt + s
\int_{1}^{\infty} \frac{(\log t)^{s-1}}{t^{2}}\, dt\notag\\
&=\int_{0}^{\infty}e^{-u} u^{s}\,du + s\int_{0}^{\infty}e^{-u} u^{s-1}\,du = \Gamma(s+1)+ s \Gamma(s)\leq 2\,\textup{exp}(s\log s)\label{EULER.s!}
\end{align} (we used the trivial bound $s!\leq s^{s}$).

Since
\[
\omega(p) \leq \pi (x),
\]we have
\begin{equation}\label{EULER.4}
\sum_{p\leq\gamma_{d}}\frac{\omega(p)(\log p)^{s}}{p} \leq C_{1}^{s} \pi (x),
\end{equation}where $C_{1}>0$ is a constant depending only on $E$ and $f$.

 For any $\gamma_{d}< p\leq (\log x)^{1/14}$ we have
\[
\textup{exp} (-b p^{-2} \sqrt{\log x})\leq \textup{exp} \big(-b (\log x)^{1/2 - 1/7}\big)
\leq \textup{exp} \big(-(\log x)^{1/3}\big),
\]if $x_{0}$ is sufficiently large. Set
\[
A(t) := \sum_{p \leq t} (\log p)^{s}.
\]Then for any $t\geq 2$ we have
\[
A(t) \leq (\log t)^{s} \pi(t) \leq a t (\log t)^{s-1},
\]where $a>0$ is an absolute constant. Applying partial summation, for any positive integer $N\geq 2$ we obtain
\[
\sum_{p \leq N} \frac{(\log p)^{s}}{p}=\frac{A(N)}{N} + \int_{2}^{N}\frac{A(t)}{t^{2}}\, dt
\leq a (\log N)^{s-1} + a \int_{2}^{N}\frac{(\log t)^{s-1}}{t}\, dt < a_{1} (\log N)^{s},
\]where $a_{1}>0$ is an absolute constant. We obtain
\begin{equation}
x \sum_{ \gamma_{d}< p\leq (\log x)^{1/14}}\textup{exp} (-b p^{-2} \sqrt{\log x})
\frac{(\log p)^{s}}{p}\leq a_{1}\Big(\frac{1}{14}\Big)^{s} \frac{x (\log\log x)^{s}}{\textup{exp} ((\log x)^{1/3})}.\label{EULER.3}
\end{equation}

 Gathering \eqref{EULER.1}--\eqref{EULER.3} together, we obtain
\[
\sum_{3 \leq q \leq x} \bigg(\frac{f(\#E(\mathbb{F}_{q}))}{\varphi(f(\#E(\mathbb{F}_{q})))}\bigg)^{s}
\leq\textup{exp}(s\log s)C_{2}^{s}\frac{x}{\log x} + C_{2}^{s} \frac{x (\log\log x)^{s}}{\textup{exp} ((\log x)^{1/3})}= I + II
\]for any $x\geq x_{0}, $where $C_{2}>0$ is a constant depending only on $E$ and $f$.

Let us show that $II \leq I$. It suffices to show that
\begin{equation}\label{BASIC.s.ART}
s \log\log\log x + \log\log x - (\log x)^{1/3} \leq s\log s.
\end{equation} We first note the following trivial inequality. If $c>0$, then
\begin{equation}\label{BASIC.c.PREP.ART}
y\log y - c y \geq - \textup{exp}(c-1)
\end{equation}for all $y>0$. Indeed, consider the function $h(y) = y\log y - cy$. Then we have $h' (y) = \log y - (c-1)$. We obtain
\[
h(y) \geq h(\textup{exp}(c-1))=- \textup{exp}(c-1)
\]and \eqref{BASIC.c.PREP.ART} is proved. Applying \eqref{BASIC.c.PREP.ART} with $c=\log\log\log x$, we obtain
\begin{align*}
s\log s - s \log\log\log x + &(\log x)^{1/3} - \log\log x\\
&\geq
-\textup{exp}(\log\log\log x - 1)+ (\log x)^{1/3} - \log\log x>0,
\end{align*}if $x_{0}$ is sufficiently large. Thus, \eqref{BASIC.s.ART} is proved, and hence $II \leq I$.

 We obtain
\[
\sum_{3 \leq q \leq x} \bigg(\frac{f(\#E(\mathbb{F}_{q}))}{\varphi(f(\#E(\mathbb{F}_{q})))}\bigg)^{s}
\leq\textup{exp}(s\log s + Cs)\pi(x)
\]for any $x \geq x_{0}$, where $C>0$ is a constant depending only on $E$ and $f$.

If $3\leq x \leq x_{0}$, then, by \eqref{M.ESTIMATE}, for any $3 \leq q \leq x$ we have
\[
\frac{f(\#E(\mathbb{F}_{q}))}{\varphi (f(\#E(\mathbb{F}_{q})))}\leq f(\#E(\mathbb{F}_{q}))\leq c x_{0}^{d}=:D,
\]where $D>0$ is a constant depending only on $E$ and $f$. Hence,
\[
\sum_{3 \leq q \leq x} \bigg(\frac{f(\#E(\mathbb{F}_{q}))}{\varphi(f(\#E(\mathbb{F}_{q})))}\bigg)^{s}\leq
D^{s} \pi (x).
\] Theorem \ref{T.EULER.ELLIPTIC} is proved.

\section{Acknowledgements}
The author is grateful to the anonymous reviewers for their helpful comments.
This work is an output of a research project (HSE-BR-2025-84) implemented as part of the Basic Research Program at HSE University.

\end{document}